\newtheorem{theorem}{Theorem}%[section]
\newtheorem{lemma}[theorem]{Lemma}%[section]
\newtheorem{corollary}[theorem]{Corollary}%[section]
\newtheorem{conjecture}[theorem]{Conjecture}%[section]
\numberwithin{equation}{section}
\renewcommand{\a}{\ensuremath{\alpha}}
\newcommand{\Z}{\ensuremath{\mathbb{Z}}\xspace}
\newcommand{\supp}{\ensuremath{\operatorname{Supp}}\xspace}
\renewcommand{\phi}{\varphi}
\renewcommand{\leq}{\leqslant}
\renewcommand{\geq}{\geqslant}
\begin{document}
\title{Supports of weight modules over Witt algebras}
\author{Volodymyr Mazorchuk and Kaiming Zhao}
\date{}
\maketitle

\begin{abstract}
In this paper,  as the first step towards classification of simple
weight modules with finite dimensional weight spaces over Witt
algebras $W_n$, we explicitly describe supports of such modules. We
also obtain some descriptions on the support of an arbitrary simple
weight module over a $\Z^n$-graded Lie algebra $\mathfrak{g}$ having a 
root space decomposition $\oplus_{\alpha\in\Z^n}\mathfrak {g}_\alpha$ 
with respect to the abelian subalgebra $\mathfrak {g}_0$, 
with the property $[\mathfrak{g}_\alpha,\mathfrak {g}_\beta]=
\mathfrak {g}_{\alpha+\beta}$ for
all $\alpha,\beta\in\Z^n$, $\alpha\neq \beta$ (this class 
contains the algebra $W_n$).
\end{abstract}

\vskip 10pt \noindent {\em Keywords:}  Witt algebra, weight
module, simple module, support

\vskip 5pt
\noindent
{\em 2000  Math. Subj. Class.:}
17B10, 17B20, 17B65, 17B66, 17B68

\vskip 10pt

\section{Introduction}\label{s0}

Classification of simple weight modules is a classical problem
in the representation theory of Lie algebras. Simple weight
modules with finite-dimensional weight spaces (sometimes also
called Harish-Chandra modules) are classified for several classes
of algebras, including simple finite-dimensional algebras
(\cite{M2}) and various generalized Virasoro algebras
(\cite{M,Ma25,LZ}). In the general case, however, the problem
is solved only for the Lie algebra $\mathfrak{sl}_2$ (\cite{Ga}, see
\cite[Chapter~3]{Ma5} for details).

A first step in understanding simple weight modules is to understand
possible forms for the support of a module. For simple finite-dimensional
Lie algebras this was originated in  \cite{Fe,Fu} and completed in
\cite{DMP}, where it was shown that any simple weight module is either
dense (that is has the maximal possible support) or is the quotient
of a parabolically induced module. For some algebras of Cartan type
a similar result was obtained in \cite{PS}.

A new effect appears for generalizations of the Virasoro algebra. In
this case already in the class of Harish-Chandra modules there are
modules whose support is one element less than the maximal possible
(in what follows we call such modules punctured). Description of the
support for weight modules over the Virasoro algebra is relatively
easy (see \cite{Ma3}) and for all other algebras there are only some
partial results (\cite{Ma1,Ma2}).

The original motivation for the present paper is the problem of
classification of simple Harish-Chandra modules for the Witt algebra
$W_n$ (the algebra of derivations of a Laurent polynomial algebra in
$n$ commuting variables). For the moment this problem seems to be
too difficult, so as a first step in the present paper we completely
determine the support of such modules, basically reducing the
original classification problem to two cases: classification of
simple dense modules and classification of simple punctured modules.
The main result of the present paper asserts that any simple
Harish-Chandra $W_n$-module is either dense (with uniformly bounded
weight spaces) or punctured (with uniformly bounded weight spaces)
or is the simple quotient of some generalized Verma module. The
latter class of modules is relatively well-understood (\cite{BZ}).
It is known that both dense and punctured modules do exist
(\cite{Sh,Ra}). The main result of the paper is formulated and
proved in Section~\ref{s2} after some preliminaries collected in
Section~\ref{s1}.

We also obtain some partial results about the form of the support
for arbitrary weight modules over $\Z^n$-graded Lie algebras
$\mathfrak {g}=\oplus_{\alpha\in\Z^n}\mathfrak {g}_\alpha$ (root
space decomposition with respect to the abelian subalgebra
$\mathfrak {g}_0$) with the property $[\mathfrak
{g}_\alpha,\mathfrak {g}_\beta]=\mathfrak {g}_{\alpha+\beta}$ for
all $\alpha,\beta\in\Z^n$ with $\alpha\neq \beta$, which generalize
the results in \cite{Ma1,Ma2}. In fact, we recover and give a more
detailed proof for the latter results. In particular, we show that
under some mild additional assumptions the complement (in the weight
lattice) to the support of a simple module is either very small
(roughly speaking belongs to a sublattice of dimension $n-2$) or is
at least a half  of the lattice, see Theorem 8. This is done in
Subsection~\ref{s3.1}. The case $n=2$ (related to \cite{Ma1,Ma2}) is
studied in details in Subsection~\ref{s3.2}.

We finish the paper with a brief description of similar results for
so-called mixed modules. The support of a module can be refined to
encode the information about finite-dimensional and
infinite-dimensional weight spaces separately. A mixed module is a
module which contains both finite- and infinite-dimensional weight
spaces in the same coset of a weight lattice. In \cite{MZ} it was
shown that there are no simple mixed modules over the Virasoro
algebra. For $W_n$, $n>1$, mixed modules do exist. However, in
Subsection~\ref{s3.3} we show that the part of the support of the
mixed module, which describes infinite-dimensional weight spaces,
behaves similarly to the support of a Harish-Chandra module. In
particular, we deduce that under some mild assumptions the support
of a mixed module is contained in a half of the weight lattice. We
conjecture that any mixed $W_n$-module is neither dense nor
punctured.

\section{Notation and preliminaries}\label{s1}

\subsection{Weight modules over Witt algebras}\label{s1.1}

We denote by $\mathbb{Z}$, $\mathbb{Z}_+$, $\mathbb{N}$ and $\mathbb{Q}$
the sets of  all integers, nonnegative integers, positive integers
and rational numbers, respectively.
For any set $X$ every $\alpha\in X^n$ has the form $\alpha=
(\alpha_1,\alpha_2,\dots,\alpha_n)$, where $\alpha_i\in X$ for all $i$.
For a Lie algebra $\mathfrak{a}$ we denote by $U(\mathfrak{a})$
the corresponding universal enveloping algebra.

Let $\Bbbk$ denote an algebraically closed field of characteristic zero.
For a positive integer $n$ the corresponding
classical Witt algebra $\mathfrak{W}_n$ is defined as the algebra
of derivations of the Laurent polynomial algebra
$\Bbbk[t_1^{\pm1},\dots,t_n^{\pm1}]$ in $n$ commuting variables
$t_1, t_2,\dots,t_n$. The algebra $\mathfrak{W}_n$ is simple and
for $n=1$ the algebra $\mathfrak{W}_1$ is the centerless Virasoro algebra.

We fix a positive integer $n$ and denote $\mathfrak{g}=\mathfrak{W}_n$.
For $i\in\{1,2,\dots,n\}$ set $\partial _i=t_i\frac{\partial}{\partial t_i}$
and denote by $\mathfrak {g}_0$ the $\Bbbk$-linear span of all the
$\partial _i$'s. Then $\mathfrak {g}_0$ is an abelian Lie subalgebra
of $\mathfrak{g}$, called the {\em Cartan subalgebra}.

For any $\alpha\in\mathbb{Z}^n$
set $t^\alpha=t_1^{\alpha_1}t_2^{\alpha_2}\cdots t_n^{\alpha_n}$.
If $\partial\in\mathfrak{g}_0$ is arbitrary, then
$t^\alpha\partial\in\mathfrak{g}$. Setting  $\mathfrak{g}_\alpha=
t^\alpha\mathfrak{g}_0$ we obtain the following  decomposition
of $\mathfrak{g}$:
\begin{equation}\label{eqno1}
\mathfrak{g}=\bigoplus_{\alpha\in\mathbb{Z}^n}\mathfrak{g}_\alpha.
\end{equation}
It is easy to check that $[\mathfrak{g}_\alpha,\mathfrak{g}_\beta]\subset
\mathfrak{g}_{\alpha+\beta}$
(and even $[\mathfrak{g}_\alpha,\mathfrak{g}_\beta]=
\mathfrak{g}_{\alpha+\beta}$ unless $\alpha=\beta$)
and hence the above decomposition,
in fact, induces a $\mathbb{Z}^n$-grading of $\mathfrak{g}$.
The adjoint action of $\mathfrak{g}_0$ on $\mathfrak{g}$ is
diagonalizable and the decomposition \eqref{eqno1} coincides
with the decomposition of $\mathfrak{g}$ into a direct sum
of $\mathfrak{g}_0$-eigenspaces.

If $\gamma\in\Bbbk^n$ is such that
$\gamma_1, \gamma_2,\dots, \gamma_n$ are linearly independent over $\mathbb{Q}$, then the subalgebra
\begin{displaymath}
\mathrm{Vir}(\gamma)=\Bbbk\langle t^\beta(\gamma_1\partial_1+
\gamma_2\partial_2+\cdots+ \gamma_n\partial_n)\,:\,\beta\in\Z^n
\rangle
\end{displaymath}
of $\mathfrak{g}$ is a centerless {\em generalized}
(or {\em higher rank}) Virasoro algebra of rank $n$ (in the
sense of \cite{PZ}). For any subgroup $G$ of $\mathbb{Z}^n$
there is also the corresponding subalgebra
$\displaystyle\mathfrak{g}(G)=\bigoplus_{\alpha\in G}
\mathfrak{g}_{\alpha}$  of $\mathfrak{g}$.

A $\mathfrak{g}$-module $V$ is called a {\em weight} module
provided that the action of $\mathfrak{g}_0$ on $V$ is
diagonalizable. For example, from the previous paragraph it follows
that the adjoint $\mathfrak{g}$-module is a weight module. For any
weight module $V$ we have the decomposition
\begin{equation}\label{eqno2}
V=\bigoplus_{\lambda\in \mathfrak{g}_0^*}V_{\lambda},
\end{equation}
where $\mathfrak{g}_0^*=\mathrm{Hom}_{\Bbbk}(\mathfrak{g}_0,\Bbbk)$
and
\begin{displaymath}
V_{\lambda}=\{v\in V:\partial v=\lambda(\partial)v
\text{ for all }\partial\in \mathfrak{g}_0\}.
\end{displaymath}
The space $V_{\lambda}$ is called the {\em weight space} corresponding
to the {\em weight} $\lambda$. The {\em support} $\mathrm{supp}(V)$
of the weight module
$V$ is defined as the set of all weights $\lambda$ for which
$V_{\lambda}\neq 0$. If $V$ is a weight $\mathfrak{g}$-module
and $\dim_{\Bbbk}V_{\lambda}<\infty$ for all
$\lambda\in \mathfrak{g}_0^*$, the module
$V$ is called a {\em Harish-Chandra} module.

We consider $\mathbb{Z}^n$ as a subset of $\mathfrak{g}_0^*$ such
that each $\alpha\in \mathbb{Z}^n$ becomes the weight of the weight
space $\mathfrak{g}_{\alpha}$ in the adjoint $\mathfrak{g}$-module
(i.e. $\a(\partial_i)=\a_i$ for all $\alpha \in \mathbb{Z}^n$).
Under this convention, the decomposition \eqref{eqno1} becomes a
special case of the decomposition \eqref{eqno2} for the adjoint
$\mathfrak{g}$-module. Furthermore, if $V$ is a weight
$\mathfrak{g}$-module, then for all $\alpha\in \mathbb{Z}^n$ and
$\lambda\in \mathfrak{g}_0^*$ we have
$\mathfrak{g}_{\alpha}V_{\lambda}\subset V_{\lambda+\alpha}$. From
this it follows that if $V$ is an indecomposable weight module  (in
particular, simple), then $\mathrm{supp}(V)\subset
\lambda+\mathbb{Z}^n$ for some $\lambda\in \mathfrak{g}_0^*$.

A weight $\mathfrak{g}$-module $V$ is called
\begin{itemize}
\item {\em dense} provided that $\mathrm{supp}(M)=\lambda+\mathbb{Z}^n$
for some $\lambda\in\mathfrak {g}_0^*$;
\item {\em punctured} provided that $\mathrm{supp}(M)=
\mathbb{Z}^n\setminus\{0\}$;
\item {\em uniformly bounded} provided that there exists a positive
integer $N$ such that $\dim V_\lambda<N$ for all
$\lambda\in\mathfrak {g}_0^*$.
\end{itemize}

\subsection{Highest weight modules over Witt algebras}\label{s1.2}

Choose some subgroup $G\subset \mathbb{Z}^n$ and some nonzero
$\beta\in \mathbb{Z}^n$ such that $\mathbb{Z}^n\cong G\oplus H$,
where $H$ is the subgroup of $\mathbb{Z}^n$, generated by
$\beta$. Define the following subalgebras of $\mathfrak{g}$:
\begin{displaymath}
\mathfrak{a}_G:=\bigoplus_{\alpha\in G}\mathfrak{g}_{\alpha};\quad
\mathfrak{n}^+_G:=\bigoplus_{\alpha\in G,k\in\mathbb{N}}
\mathfrak{g}_{\alpha+k\beta};\quad
\mathfrak{n}^-_G:=\bigoplus_{\alpha\in G,k\in\mathbb{N}}
\mathfrak{g}_{\alpha-k\beta}.
\end{displaymath}
This gives the triangular decomposition
$\mathfrak{g}=\mathfrak{n}^-_G\oplus \mathfrak{a}_G
\oplus \mathfrak{n}^+_G$ and allows us to define highest weight
$\mathfrak{g}$-modules with respect to this decomposition.
Note that the algebras $\mathfrak{n}^-_G$ and
$\mathfrak{n}^+_G$ do not really depend on $\beta$ but rather
on the coset $\beta+G$ (which can be chosen in two different
ways, namely as $\beta+G$ or $-\beta+G$).

Let $X$ be a simple weight $\mathfrak{a}_G$-module. Setting
$\mathfrak{n}^+_G X=0$ we turn $X$ into a
$\mathfrak{a}_G\oplus \mathfrak{n}^+_G$-module. We define
the {\em generalized Verma module} $M(G,\beta,X)$ as follows:
\begin{displaymath}
M(G,\beta,X):=
U(\mathfrak{g})\bigotimes_{\mathfrak{a}_G\oplus \mathfrak{n}^+_G}X.
\end{displaymath}
The module $M(G,\beta,X)$ is an indecomposable weight module and it
has a unique simple quotient, which we will denote by
$L(G,\beta,X)$. In \cite{BZ} it was shown that $L(G,\beta,X)$ is a
Harish-Chandra module if $X$ is a uniformly bounded exp-polynomial
module. Moreover, the module $L(G,\beta,X)$ itself is not uniformly
bounded unless $X$ is the trivial module (in which case
$L(G,\beta,X)$ is the trivial module itself). Loosely speaking we
will call $L(G,\beta,X)$ a {\em simple highest weight} module.

\section{Description of supports for Harish-Chandra
$\mathfrak{g}$-modules}\label{s2}

\subsection{Formulation of the main result}\label{s2.1}

Our main result is the following statement:

\begin{theorem}\label{mainthm}
Let $V$ be a nontrivial simple Harish-Chandra $\mathfrak{g}$-module. Then
exactly one of the following statements takes place:
\begin{enumerate}[(a)]
\item\label{mainthm.1} $V$ is dense and uniformly bounded.
\item\label{mainthm.2} $V$ is punctured and uniformly bounded.
\item\label{mainthm.3} $V\cong L(G,\beta,X)$ for some
$G$, $\beta$ and a uniformly bounded $X$ as in Subsection~\ref{s1.2}.
\end{enumerate}
\end{theorem}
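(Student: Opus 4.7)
The plan is to apply the support dichotomy of Theorem~8 from Subsection~\ref{s3.1} and then, in one branch, extract a highest weight structure, while in the other branch obtain uniform boundedness by restriction to a generalized Virasoro subalgebra. After shifting weights we may assume $\supp(V)\subseteq\Z^n$. Theorem~8 then forces one of two alternatives: either the complement $\Z^n\setminus\supp(V)$ lies in a subset of essentially codimension two, or there exist a rank $n-1$ subgroup $G\subset\Z^n$ and $\beta\in\Z^n\setminus G$ such that, possibly after replacing $\beta$ by $-\beta$, $\supp(V)\cap(k\beta+G)=\emptyset$ for all $k>0$. The second alternative will yield case~(c); the first will yield cases~(a) and~(b).

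In the half-space regime I would pick $\mu\in\supp(V)$ of maximal $\beta$-coordinate, which exists because the positive $\beta$-cosets carry no weights. Then $\mathfrak{n}_G^+V_\mu=0$, so $W=\bigoplus_{\alpha\in G}V_{\mu+\alpha}$ is a Harish-Chandra $\mathfrak{a}_G$-module annihilated by $\mathfrak{n}_G^+$. Choosing a simple $\mathfrak{a}_G$-subquotient $X$ of $W$ and invoking the universal property of $M(G,\beta,X)$ together with simplicity of $V$ produces an isomorphism $V\cong L(G,\beta,X)$. Uniform boundedness of $X$ should then follow by induction on $n$, using that $\mathfrak{a}_G$ is built from $\mathfrak{W}_{n-1}$ enlarged by one extra commuting Cartan element.

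In the small-complement regime I would sharpen Theorem~8's conclusion to show that $\Z^n\setminus\supp(V)$ is empty or a single point. Simplicity of $V$ together with the bracket condition $[\mathfrak{g}_\alpha,\mathfrak{g}_\beta]=\mathfrak{g}_{\alpha+\beta}$ for $\alpha\neq\beta$ produces nonzero images $\mathfrak{g}_\alpha V_{\lambda-\alpha}\subseteq V_\lambda$ whenever sufficiently many neighbours of $\lambda$ lie in $\supp(V)$, which is incompatible with an interior missing point when the complement has codimension two. A translation-homogeneity argument then forces any surviving isolated missing point to be $0$, yielding case~(b); otherwise $\supp(V)=\Z^n$, which is case~(a). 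For uniform boundedness in both cases I would restrict $V$ to $\mathrm{Vir}(\gamma)$ for some $\gamma\in\Bbbk^n$ with $\Q$-linearly independent coordinates, so that every $\mathrm{Vir}(\gamma)$-weight space of $V$ consists of exactly one $\mathfrak{g}_0$-weight space; the restriction is then a Harish-Chandra $\mathrm{Vir}(\gamma)$-module whose support is dense or dense-minus-a-point, and the classification of Harish-Chandra modules over generalized Virasoro algebras (\cite{M,Ma25,LZ}) forces all simple subquotients to be uniformly bounded, so $V$ is too.

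The hardest step will be the half-space case, specifically showing that the simple $\mathfrak{a}_G$-subquotient $X$ is genuinely uniformly bounded so that the results of \cite{BZ} can be used to confirm that $L(G,\beta,X)$ is Harish-Chandra. The difficulty is that $\mathfrak{a}_G$ is not itself a Witt algebra of lower rank but $\mathfrak{W}_{n-1}$ enlarged by one extra commuting derivation, so the inductive hypothesis must be applied in a slightly larger setting with careful tracking of the extra Cartan direction. A secondary obstacle is the upgrade from Theorem~8's ``codimension-two'' complement to the tight ``at most one missing point'' conclusion, which requires the full strength of the bracket relations combined with simplicity.
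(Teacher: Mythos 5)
Your proposal routes the argument through Theorem~8 (\emph{i.e.}\ Theorem~\ref{thm31}), but this does not match the paper and, more importantly, misreads that theorem in a way that leaves a genuine gap. Theorem~\ref{thm31} is not an unconditional support dichotomy: its hypothesis is that $V$ already contains a generalized highest weight element (for some choice of variables), and its conclusion is only that $V$ is cut, i.e.\ $\mathrm{supp}(V)\subset\lambda+b+\mathbb{Z}^{(a)}_{-0}$. There is no ``complement of codimension two'' alternative in its statement, and there is no a priori reason such an $a$ can be taken integral so as to produce a rank $n-1$ subgroup $G$ with $\mathrm{supp}(V)\cap(k\beta+G)=\varnothing$ for $k>0$. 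The paper does not invoke Theorem~\ref{thm31} to prove Theorem~\ref{mainthm} at all (it is proved later in Section~\ref{s3}); instead the central device is Lemma~\ref{l1}, which shows that a \emph{not uniformly bounded} Harish-Chandra module can be put, after a change of variables, into the form where a fixed weight vector is killed by $\mathfrak{g}_{e_1},\dots,\mathfrak{g}_{e_n}$, i.e.\ a generalized highest weight vector exists. Your proposal never produces such a vector, so the hypothesis you would need for Theorem~\ref{thm31} (and for the subsequent ``maximal $\beta$-coordinate'' step) is not available, and the proposed dichotomy has no justification.

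There is a second gap in the uniform boundedness step. You claim that if $V$ is dense or dense-minus-a-point, then its restriction to $\mathrm{Vir}(\gamma)$ has only uniformly bounded simple subquotients by the classification in~\cite{LZ}, hence $V$ is uniformly bounded. The classification does not give this: a dense module can certainly have highest weight $\mathrm{Vir}(\gamma)$-subquotients (whose support is a half-lattice), and nothing in the classification excludes them. What forces uniform boundedness in the paper is the contrapositive chain: Lemma~\ref{l1} says not-UB implies a generalized highest weight vector, and Lemma~\ref{lem102} shows that once such a vector exists, the support along every ray $\mu+x\alpha$, $\alpha\in\mathbb{N}^n$, is a proper half-line (the proof is a counting argument: if the support along the ray were all of $\mathbb{Z}$, the fixed weight $\mu$ would meet infinitely many rank-one Virasoro highest weight subquotients, making $\dim V_\mu=\infty$). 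This half-line conclusion is incompatible with dense or punctured support, whence dense/punctured forces UB. Your argument skips this mechanism entirely. Incidentally, once one has the generalized highest weight vector, your idea of picking a maximal coset and applying the universal property of $M(G,\beta,X)$ is essentially the first case of the paper's Lemma~\ref{l4}, but the paper must also (i) show that the separating hyperplane is rational, deduced from the $\mathrm{Vir}(\gamma)$-subquotient structure in Lemma~\ref{l4}, and (ii) rule out the scenario where every $\lambda+k\beta+G$ meets the support, which requires Lemma~\ref{lem102} again; your proposal does not address either point. Finally, the paper obtains the uniform boundedness of $X$ directly from~\cite{LZ} (and then invokes~\cite{BZ}), not by induction on $n$; while your inductive suggestion is not obviously wrong, it is unproved and is not what the paper does.
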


For $n=1$ all Harish-Chandra modules over $\mathfrak{W}_1$ were
classified in \cite{M}. In this case the statement of
Theorem~\ref{mainthm} follows immediately from this
classification. Dense and punctured $\mathfrak{W}_1$-module occur
as modules from the so-called {\em intermediate series}. In
particular, in what follows we assume $n>1$.

For $n>1$ the existence of modules of the type
Theorem~\ref{mainthm}\eqref{mainthm.3} follows from \cite{BZ}.
Examples of both dense and punctured $\mathfrak{W}_1$-modules
were constructed in \cite{Ra} (and in a slightly different
setting already in \cite{Sh}).

For higher rank Virasoro algebras a complete classification of all
simple Harish-Chandra modules was obtained in \cite{LZ}. From this
classification it follows that the statement of
Theorem~\ref{mainthm} is true for all higher rank Virasoro
algebras (simple highest weight modules over higher rank Virasoro
algebras are defined exactly in the same way as in
Subsection~\ref{s1.2}, we will denote these modules by
$L^{\mathfrak{a}}(G,\beta,X)$ where $\mathfrak{a}$ is the higher
rank Virasoro algebra in question).

\subsection{Auxiliary lemmata for not uniformly
bounded modules}\label{s2.2}  To prove Theorem~\ref{mainthm} we
will need several auxiliary statements. In the whole section we
assume that $V$ is a simple Harish-Chandra $\mathfrak{g}$-module
and that for some (usually fixed) $\lambda\in\mathfrak{g}_0^*$ we
have $V=\oplus_{\alpha\in\Z^n}M_{\lambda+\alpha}$ .

To start with, we assume that the module $V$ is not
uniformly bo\-un\-ded. Let $e_i=(\delta_{i1},\delta_{i2}\dots,\delta_{in})$,
where $\delta_{ij}$ is the Kronecker delta. Then
$\{e_i:i=1,2,\dots,n\}$ is the standard basis of $\mathbb{Z}^n$.

\begin{lemma}\label{l1}
Assume that  $V$ is not uniformly bounded. Then, after an appropriate
change of variables $t_1,t_2,\dots,t_n$ and the weight
$\lambda$, we may assume that  $\lambda\neq 0$ and there exists a nonzero
$v_0\in V_{\lambda}$ such that
\begin{equation}\label{gh1}
\mathfrak {g}_{e_i} v_0=0,\,\text{ for all }\, i=1,2,\dots,n.
\end{equation}
\end{lemma}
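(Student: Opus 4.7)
The plan is to combine a monomial change of variables with a dimension-counting argument. A monomial substitution $t_i \mapsto t^{A_i}$ for $A \in GL_n(\Z)$ induces an automorphism of $\mathfrak{g}$ that permutes the graded pieces $\mathfrak{g}_\alpha$ via the action of $A$ on $\Z^n$; twisting $V$ by such an automorphism relabels $\mathrm{supp}(V)$. Consequently the desired conclusion ``$\mathfrak{g}_{e_i} v_0 = 0$ for all $i$, after change of variables'' is equivalent to finding a $\Z$-basis $\{\beta_1,\dots,\beta_n\}$ of $\Z^n$, some nonzero $\lambda$ in $\mathrm{supp}(V)$, and some nonzero $v_0 \in V_\lambda$ satisfying $\mathfrak{g}_{\beta_i} v_0 = 0$ for $i=1,\dots,n$.

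The key dimension-counting ingredient is the following. For $\lambda$ in $\mathrm{supp}(V)$ and a basis $\{\beta_i\}$ of $\Z^n$, consider the linear evaluation map
\begin{equation*}
\mathrm{ev} : V_\lambda \longrightarrow \bigoplus_{i=1}^n \mathrm{Hom}_{\Bbbk}\bigl(\mathfrak{g}_{\beta_i}, V_{\lambda+\beta_i}\bigr), \qquad v \longmapsto \bigl(g \mapsto gv\bigr)_{i=1}^n.
\end{equation*}
Its kernel is exactly $\{v \in V_\lambda : \mathfrak{g}_{\beta_i} v = 0 \text{ for all } i\}$ and has dimension at least $\dim V_\lambda - n \sum_{i=1}^n \dim V_{\lambda + \beta_i}$, since each $\mathfrak{g}_{\beta_i}$ is $n$-dimensional. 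Thus it suffices to exhibit a $\lambda \neq 0$ and a basis $\{\beta_i\}$ satisfying the strict inequality $\dim V_\lambda > n \sum_{i=1}^n \dim V_{\lambda + \beta_i}$.

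To extract such a $\lambda$ from the hypothesis that $V$ is not uniformly bounded, I would study the integer-valued function $f(\mu) = \dim V_\mu$ on the lattice coset $\lambda_0 + \Z^n$ supporting $V$. Since $f$ is unbounded but takes only finite values, together with the $GL_n(\Z)$-freedom to rotate basis vectors one can select $\lambda$ at a sufficiently large ``peak'' of $f$ and then choose a basis whose forward directions move into regions of much smaller $f$-value. The weight $\lambda$ can be taken nonzero, since $\mathrm{supp}(V)$ is infinite (otherwise $V$ would be finite-dimensional, which is incompatible with the unbounded growth of weight spaces).

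The main obstacle is precisely this last step: converting unboundedness of $f$ into the strict inequality $f(\lambda) > n \sum_i f(\lambda + \beta_i)$ at some nonzero $\lambda$ and for some lattice basis. The negation, a uniform recursive bound $f(\lambda) \leq n \sum_i f(\lambda + \beta_i)$ for all $\lambda \neq 0$ and all bases, is not obviously incompatible with $f$ being unbounded. Ruling it out will use simplicity of $V$ and the fact that the $GL_n(\Z)$-orbit of bases is rich enough to probe every rational direction in $\Z^n$; the expected mechanism is that a true uniform bound of this form forces the dimensions to grow at most geometrically along every direction, and combined with the commutation relations $[\mathfrak{g}_\alpha,\mathfrak{g}_\beta] = \mathfrak{g}_{\alpha+\beta}$ (for $\alpha \neq \beta$) this collapses to uniform boundedness.
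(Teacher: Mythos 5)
Your dimension-counting reduction is exactly the mechanism the paper uses: if one can locate a nonzero weight $\lambda$ and a $\Z$-basis $\{\beta_1,\dots,\beta_n\}$ of $\Z^n$ with $\dim V_\lambda > n\sum_i \dim V_{\lambda+\beta_i}$, then the linear map $v\mapsto(gv)_{g\in\mathfrak{g}_{\beta_i}}$ has nontrivial kernel, and a suitable monomial change of variables finishes the argument. That part is correct and matches the paper's step.

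The gap is exactly the one you flag at the end, and it is a genuine one: ``$V$ is not uniformly bounded'' does not by itself give you a single weight at which $\dim V_\lambda$ strictly dominates $n\sum_i\dim V_{\lambda+\beta_i}$, since the unboundedness might a priori be spread out in a way that every neighbour pulls the weight space up with it. The paper's crucial missing ingredient is a \emph{directional} unboundedness statement: restricting $V$ to a rank-$n$ Virasoro subalgebra $\mathrm{Vir}(\gamma)$ and invoking the classification \cite[Theorem~3.9]{LZ} together with \cite{BZ}, one shows $V$ has a non-uniformly-bounded simple $\mathrm{Vir}(\gamma)$-subquotient which must be a highest weight module; after a change of variables this forces $\dim V_{\lambda - ke_1}$, $k\in\N$, to be unbounded along the single ray $\lambda - \N e_1$. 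Once the growth is concentrated along a ray, one fixes a finite box $B_N(\lambda)$ around $\lambda$, picks $k$ large enough that $\dim V_{\lambda - ke_1}$ exceeds $n\sum_{\beta\in B_N(\lambda)}\dim V_\beta$ (a \emph{fixed} finite number), and then chooses the basis $e_1'=(k+1)e_1+e_2$, $e_2'=ke_1+e_2$, $e_j'=e_1'+e_j$ ($j\geq 3$) so that each $e_i' + (\lambda - ke_1)$ lands back inside the fixed box regardless of $k$. Without the classification input giving unboundedness along a ray, there is no way to carry out the ``peak-finding'' you describe, and the heuristic about geometric growth collapsing to uniform boundedness is not substantiated. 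So the proposal is incomplete at precisely the step you identify as the main obstacle, and filling it requires the external structural result from \cite{LZ} and \cite{BZ}.
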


\begin{proof}
Choose some $\gamma\in\Bbbk^n$ such that
$\gamma_1,\gamma_2,\dots,\gamma_n$ are linearly independent over
$\mathbb{Q}$ and consider $V$ as a Harish-Chandra
$\mathrm{Vir}(\gamma)$-module, which is not uniformly bounded.

From \cite[Theorem~3.9]{LZ} and \cite{BZ} we obtain that every
uniformly bounded  $\mathrm{Vir}(\gamma)$-module is either dense
or punctured. Hence the total number of uniformly bounded simple
subquotients of the $\mathrm{Vir}(\gamma)$-module $V$ is finite
(it is bounded by $\dim_{\Bbbk} V_{0}+\dim_{\Bbbk} V_{\mu}<\infty$
for any nonzero $\mu\in\lambda+\mathbb{Z}^n$). As the module $V$
itself is not uniformly bounded, it must contain a simple
$\mathrm{Vir}(\gamma)$-subquotient, say $X$, which is not
uniformly bounded.  By \cite[Theorem~3.9]{LZ}, the module $X$ is
a highest weight $\mathrm{Vir}(\gamma)$-module and, after an
appropriate change of variables $t_1,t_2,\dots,t_n$ and the weight
$\lambda$, the module $X$ is isomorphic to the module
$L^{\mathrm{Vir}(\gamma)}(G,e_1,Y)$, where $G=\mathbb{Z}\langle
e_2,e_3,\dots,e_n\rangle$ and $Y$ is the corresponding simple
uniformly bounded $\mathfrak{b}_G$-module (here
$\mathfrak{b}_G=\mathfrak{a}_G\cap \mathrm{Vir}(\gamma)$). By
\cite{BZ} we have that the dimensions $\dim_{\Bbbk}
V_{-ke_1+\lambda}$, $k\in\mathbb{N}$, are not uniformly bounded.

Fix an integer $N>3$ and consider the finite set
\begin{displaymath}
B_N(\lambda)=\lambda+\{\a\in\Z^n\,:\,|\a_i|\leq N
\text{ for all }i\}.
\end{displaymath}
As $V$ is a Harish-Chandra module and $\dim_{\Bbbk} V_{-ke_1+\lambda}$,
$k\in\mathbb{N}$, are not uniformly bounded,
there exists a positive integer $k$ such that we have
$-ke_1+\lambda\ne 0$ and
\begin{equation}\label{eqno3}
\dim_{\Bbbk} V_{-ke_1+\lambda}>
n\sum _{\beta\in B_N(\lambda)}\dim_{\Bbbk} V_{\beta}.
\end{equation}

Set $e_1'=(k+1)e_1+e_2$, $e_2'=ke_1+e_2$, and, finally,
$e_j'= e_1'+e_j$ for all
$j=3,4,\dots,n$. Then  $\{e'_1,e'_2,...,e_n'\}$ is a new
$\mathbb{Z}$-basis of $\mathbb{Z}^n$.
Moreover, for any $i=1,2,\dots,n$ we have
$e'_i+(-ke_1+\lambda)\in B_N(\lambda)$. Note that
$\dim_{\Bbbk}\mathfrak{g}_{e'_i}=n$ (observe the factor $n$ in the
right hand side of \eqref{eqno3}). Hence,
because of our choice of $k$ above, there exists a nonzero
$v_0\in  V_{-ke_1+\lambda}$ such that $\mathfrak{g}_{e'_i}v_0=0$
for all $i$. Thus, after another change of variables
(corresponding to the choice of the $\mathbb{Z}$-basis
$\{e'_i\}$ of $\mathbb{Z}^n$) and replacement of
$\lambda$ with $-ke_1+\lambda$, we obtain $v_0\in V_{\lambda}$
such that $\mathfrak {g}_{e_i} v_0=0$ for all $i=1,2,\dots,n$.
This completes the proof of the lemma.
\end{proof}

To proceed we need some more notation.
For any $\alpha,\beta\in\Z^n$, we write $\alpha>\beta$ and
$\alpha\geq \beta$ if $\alpha_i>\beta_i$ or $\alpha_i\geq \beta_i$ for
$i=1,2,\dots,n$, respectively. For $p,q\in \mathbb{Z}$
we set $[p,q]=\{x \in \Z\,:\, p \leq x
\leq q\}$ and define $(-\infty,q]$ and $[p,+\infty)$ similarly.

An element $v\in V$ will be called a {\em generalized highest
weight} element provided that there exists some $N\in\mathbb{N}$
such that $\mathfrak{g}_{\alpha}v=0$ for every
$\alpha\in\mathbb{Z}^n$ such that $\alpha>(N,N,\dots,N)$.
Analogues for different algebras of the next two claims appeared
in various disguises and setups e.g. in \cite{Ma1,Ma2,Su,LZ}.

\begin{lemma}\label{lem101}
Let $X$ be a weight $\mathfrak{g}$-module and $x\in X$ be a
generalized highest weight element. Then every element in
$U(\mathfrak{g})x$ is a generalized highest weight element. In
particular, if $V$ is as in Lemma~\ref{l1} satisfying \eqref{gh1},
then every $v\in V$ is a generalized highest weight element.
\end{lemma}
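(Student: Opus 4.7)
The proof naturally divides into two parts: the general closure statement that $U(\mathfrak{g})x$ consists entirely of generalized highest weight elements whenever $x$ is, and the ``in particular'' assertion applied to $v_0$.

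For the first part, the plan is to first note that the collection of generalized highest weight elements is closed under addition (take the maximum of the two bounds), so that it suffices to prove closure under the action of PBW monomials $y_1 y_2 \cdots y_m \in U(\mathfrak{g})$ with $y_i \in \mathfrak{g}_{\gamma_i}$. Inducting on $m$, this reduces to the length-one case: if $x$ is a generalized highest weight element with bound $N$ and $y \in \mathfrak{g}_\gamma$, show that $yx$ is again generalized highest weight. For any $w \in \mathfrak{g}_\beta$, the identity
\begin{displaymath}
w(yx) \;=\; [w,y]x + y(wx)
\end{displaymath}
together with $[w,y] \in \mathfrak{g}_{\beta + \gamma}$ shows that both summands vanish as soon as both $\beta$ and $\beta + \gamma$ are coordinate-wise strictly greater than $(N, \ldots, N)$. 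Taking the new bound $N' := N + \max_i |\gamma_i|$ secures this for all $\beta > (N', \ldots, N')$, so $yx$ is generalized highest weight with bound $N'$.

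For the ``in particular'' part, the crux is to show that $v_0$ itself is a generalized highest weight element with bound $N = 0$, since then the first part combined with simplicity of $V$ (giving $V = U(\mathfrak{g}) v_0$) yields the conclusion at once. My plan is to prove by induction on $|\alpha| := \sum_i \alpha_i$ that $\mathfrak{g}_\alpha v_0 = 0$ for every $\alpha \in \mathbb{Z}_{\geq 0}^n$ having at least two nonzero coordinates; since $n > 1$, this covers every $\alpha$ with $\alpha_i \geq 1$ for all $i$, i.e.\ every $\alpha > (0, \ldots, 0)$. The inductive step will write $\alpha = e_i + (\alpha - e_i)$ with $i$ chosen so that $\alpha_i \geq 1$, $\alpha - e_i$ still lies in the inductive class (either equal to some $e_j$ handled directly by \eqref{gh1}, or again with at least two nonzero coordinates and thus covered by the induction hypothesis), and $\alpha - e_i \neq e_i$. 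Under the standing identity $[\mathfrak{g}_\beta, \mathfrak{g}_\delta] = \mathfrak{g}_{\beta + \delta}$ for $\beta \neq \delta$, every element of $\mathfrak{g}_\alpha$ is then a sum of commutators $[y, z]$ with $y \in \mathfrak{g}_{e_i}$ and $z \in \mathfrak{g}_{\alpha - e_i}$; since $y v_0 = 0 = z v_0$, one obtains $[y, z] v_0 = 0$. The base case $\alpha = e_i + e_j$ with $i \neq j$ is already of exactly this form.

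The main obstacle will be navigating the exceptional clause $\beta = \delta$ in the bracket identity: for instance $\mathfrak{g}_{2 e_i} \neq [\mathfrak{g}_{e_i}, \mathfrak{g}_{e_i}]$, so the decomposition must never be forced into $\alpha - e_i = e_i$, i.e.\ $\alpha = 2 e_i$. This is precisely where the standing assumption $n > 1$ enters: an $\alpha$ with at least two nonzero coordinates is never of this form, and when one candidate index $i$ leads to a bad decomposition there is always another available. Organizing the induction so that the ``bad'' vectors $2 e_i$ are never reached (and in particular never needed, since they lie outside the positive cone $\alpha > (0,\ldots,0)$) is the only subtle point; everything else is routine bookkeeping with commutators.
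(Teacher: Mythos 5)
Your proof is correct and follows essentially the same line as the paper's. The key step in the first part — the identity $w(yx) = [w,y]x + y(wx)$ with $[w,y]\in\mathfrak{g}_{\beta+\gamma}$, so that a suitably enlarged bound $N'$ kills both terms — is exactly the paper's computation $\mathfrak{g}_{\alpha}\mathfrak{g}_{\beta}v\subset\mathfrak{g}_{\beta}\mathfrak{g}_{\alpha}v+\mathfrak{g}_{\beta+\alpha}v$, and the reduction to a single $y\in\mathfrak{g}_{\gamma}$ matches the paper's remark that $U(\mathfrak{g})$ is generated by the $\mathfrak{g}_{\alpha}$.

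The one place where you go further than the paper is the ``in particular'' clause: the paper simply asserts that \eqref{gh1} makes $v_0$ a generalized highest weight element, whereas you spell out the induction on $|\alpha|$ that actually derives $\mathfrak{g}_{\alpha}v_0=0$ for $\alpha>(0,\dots,0)$ from the $n$ vanishings $\mathfrak{g}_{e_i}v_0=0$, carefully routing around the exceptional case $\alpha-e_i=e_i$ where $[\mathfrak{g}_{e_i},\mathfrak{g}_{e_i}]\subsetneq\mathfrak{g}_{2e_i}$. That attention is warranted (the identity $[\mathfrak{g}_{\alpha},\mathfrak{g}_{\beta}]=\mathfrak{g}_{\alpha+\beta}$ genuinely fails at $\alpha=\beta$), and your observation that choosing $i$ with $\alpha_i$ maximal always avoids it is correct; note also that the standing assumption $n>1$ (made explicitly after the statement of Theorem~\ref{mainthm}) is what guarantees every $\alpha>(0,\dots,0)$ has at least two positive coordinates and therefore falls inside your inductive class. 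So this is the same proof, with the implicit step made explicit.
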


\begin{proof}
Since the algebra $U(\mathfrak{g})$ is generated by all
$\mathfrak{g}_{\alpha}$, to prove the first assertion it is enough to
show that if $x$ is a generalized highest weight element,
$\beta\in \mathbb{Z}^n$ and $a\in \mathfrak{g}_{\beta}$, then
the element $ax$ is a generalized highest weight element.

Assume that $N\in\mathbb{N}$ is such that
$\mathfrak{g}_{\alpha}v=0$ for every $\alpha\in\mathbb{Z}^n$ with
$\alpha>(N,N,\dots,N)$. Set
$N'=N+|\beta_1|+|\beta_2|+\cdots+|\beta_n|+1$. Then for every
$\alpha>(N',N',\dots,N')$ we have
\begin{displaymath}
\mathfrak{g}_{\alpha}\mathfrak{g}_{\beta}v\subset
\mathfrak{g}_{\beta}\mathfrak{g}_{\alpha}v+
\mathfrak{g}_{\beta+\alpha}v=0
\end{displaymath}
as $\alpha,\beta+\alpha>(N,N,\dots,N)$ by our choice of $N'$.
The first assertion of the lemma follows.

The module $V$ from Lemma~\ref{l1} is simple and hence is generated
by every nonzero element. By \eqref{gh1}, the element $v_0$ is a
generalized highest weight element. Therefore the second assertion of our
lemma follows directly from the first assertion.
\end{proof}

\begin{lemma}\label{lem107}
Let $V$ be as in Lemma~\ref{l1} satisfying \eqref{gh1}.
Then $\mathfrak{g}_{-\alpha}v\neq 0$ for
any nonzero $v\neq V$ and any $\alpha\in\mathbb{N}^n$.
\end{lemma}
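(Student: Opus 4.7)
The plan is to argue by contradiction. Suppose $\mathfrak{g}_{-\alpha}v=0$ for some nonzero $v\in V$ and some $\alpha\in\mathbb{N}^n$. By Lemma~\ref{lem101} every vector of $V$ is a generalized highest weight element, so there exists $N\in\mathbb{N}$ such that $\mathfrak{g}_\beta v=0$ for all $\beta>(N,N,\dots,N)$.

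The main tool will be the following bracket trick: whenever $\mathfrak{g}_\beta v=0$ and $\beta\neq -\alpha$, the grading identity $[\mathfrak{g}_\beta,\mathfrak{g}_{-\alpha}]=\mathfrak{g}_{\beta-\alpha}$ together with $\mathfrak{g}_{-\alpha}v=0$ forces $\mathfrak{g}_{\beta-\alpha}v=[\mathfrak{g}_\beta,\mathfrak{g}_{-\alpha}]v=0$. My intention is to iterate this to propagate the vanishing throughout $\mathbb{Z}^n\setminus\{0\}$. The principal obstacle is the exceptional step $\beta=-\alpha$, which blocks the direct descent $\beta\mapsto\beta-\alpha\mapsto\beta-2\alpha\mapsto\cdots$ exactly when the chain would cross $-\alpha$; this is what prevents a naive one-step argument from reaching targets inside $\mathbb{Z}\alpha$.

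To circumvent the obstacle I would split into two cases. If $\gamma\in\mathbb{Z}^n$ is not a $\mathbb{Z}$-multiple of $\alpha$, choose $k$ large enough that $\beta:=\gamma+k\alpha>(N,\dots,N)$; since $\gamma\notin\mathbb{Z}\alpha$, none of the intermediate points $\beta-j\alpha$ with $0\leq j\leq k-1$ equals $-\alpha$, so $k$ applications of the bracket trick give $\mathfrak{g}_\gamma v=0$. If instead $\gamma=m\alpha$ with $m\neq 0$, I exploit the hypothesis $n>1$ and the fact that every coordinate of $\alpha$ is positive to split $\gamma=(\gamma+e_1)+(-e_1)$. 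Neither summand lies in $\mathbb{Z}\alpha$ (otherwise $e_1$ would be a rational multiple of $\alpha$, forcing some $\alpha_i$ with $i\geq 2$ to vanish), and $\gamma+e_1\neq -e_1$ (otherwise $m\alpha=-2e_1$ would force $m=0$), so the first case applies to both summands and the identity $[\mathfrak{g}_{\gamma+e_1},\mathfrak{g}_{-e_1}]=\mathfrak{g}_\gamma$ yields $\mathfrak{g}_\gamma v=0$.

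This establishes $\mathfrak{g}_\gamma v=0$ for every nonzero $\gamma\in\mathbb{Z}^n$. Applying the same commutator argument once more, with any fixed nonzero $\eta$ and using $[\mathfrak{g}_\eta,\mathfrak{g}_{-\eta}]=\mathfrak{g}_0$, one also gets $\mathfrak{g}_0 v=0$. Hence $\mathfrak{g}v=0$, so $\Bbbk v$ is a trivial $\mathfrak{g}$-submodule of $V$; simplicity of $V$ then forces $V=\Bbbk v$, which is one-dimensional and in particular uniformly bounded, contradicting the standing assumption from Lemma~\ref{l1} that $V$ is not uniformly bounded. The only genuinely delicate point in the whole argument is the auxiliary splitting needed to reach the targets inside $\mathbb{Z}\alpha$.
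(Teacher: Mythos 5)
Your proof is correct, and it follows the same overall strategy as the paper's: assume $\mathfrak{g}_{-\alpha}v=0$, combine this with the generalized highest weight property of $v$ (from Lemma~\ref{lem101}) to show that $\mathfrak{g}v=0$, and conclude that $\Bbbk v$ is a trivial submodule, contradicting simplicity together with the standing hypothesis that $V$ is not uniformly bounded.

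The execution of the combinatorial step differs in a minor but worthwhile way. The paper takes $N$ so that $\mathfrak{g}_{e_i+N\alpha}v=0$ for all $i$, observes that $\{e_i+N\alpha\}_{i=1}^n\cup\{-\alpha\}$ generates $\mathbb{Z}^n$ as a monoid, and from this deduces that $\mathfrak{W}_n$ is generated as a Lie algebra by the corresponding root spaces — all in one stroke, labelled ``it is easy to see.'' Your version unwinds this: you descend explicitly along $-\alpha$, isolating the single obstruction to the bracket identity $[\mathfrak{g}_a,\mathfrak{g}_b]=\mathfrak{g}_{a+b}$ (namely $a=b$), and resolve the exceptional rays $\mathbb{Z}\alpha$ by detouring through $\pm e_1$, using $n>1$ and $\alpha\in\mathbb{N}^n$. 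This buys transparency: the paper's passage from monoid generation to Lie-algebra generation silently requires that the iterated brackets can always be arranged so that no step brackets a root space with itself, and your case split makes exactly that point visible. The cost is a somewhat longer argument for what the paper treats as immediate. Both are valid; the paper's is terser, yours is more self-contained.
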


\begin{proof}
Assume that $\mathfrak{g}_{-\alpha}v=0$ for some nonzero $v\neq
V$. By Lemma~\ref{lem101}, there exists $N\in \mathbb{N}$ such
that $\mathfrak{g}_{e_i+N\alpha}v=0$ for every $i=1,2,\dots,n$. It
is easy to see that the monoid $\mathbb{Z}^n$ is generated, as a
monoid, by the elements $e_i+N\alpha$, $i=1,2,\dots,n$, and
$-\alpha$. It follows that $\mathfrak{W}_n$ is generated, as a Lie
algebra, by $\mathfrak{g}_{e_i+N\alpha}$, $i=1,2,\dots,n$, and
$\mathfrak{g}_{-\alpha}$. Hence $\mathfrak{W}_nv=0$, which implies
that $V$ is the trivial module. This contradicts our assumption
that $V$ is not uniformly bounded and the claim of the lemma
follows.
\end{proof}

Analogues of the next two claims appeared in the
setup of generalized Virasoro algebras in \cite[Lemma~3.1]{LZ}.
Our proofs below are generalizations of the ones from \cite{LZ}.

\begin{lemma}\label{lem102}
Let $V$ be as in Lemma~\ref{l1} satisfying \eqref{gh1}.
Then for any $\mu\in\supp (V)$ and any $\alpha\in\mathbb{N}^n$
we have
\begin{displaymath}
\{x \in \mathbb{Z}\,:\,\mu+x\alpha \in \supp(V)\}=(-\infty, m]
\end{displaymath}
for some $m\in \mathbb{N}\cup\{0\}$.
\end{lemma}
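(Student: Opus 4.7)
The set $S:=\{x\in\Z:\mu+x\a\in\supp V\}$ contains $0$ since $\mu\in\supp V$, so it will suffice to establish that (a) $S$ is downward-closed and contains $(-\infty,0]$, and (b) $S$ is bounded above.

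Part (a) follows directly from Lemma~\ref{lem107}: given any $x_0\in S$ and $x<x_0$, the element $(x_0-x)\a$ lies in $\N^n$, so for any nonzero $w\in V_{\mu+x_0\a}$ we have $\mathfrak g_{-(x_0-x)\a}w\neq 0$, which produces a nonzero element in $V_{\mu+x\a}$ and shows $x\in S$. Specializing to $x_0=0$ with $w\in V_\mu$ nonzero gives $S\supseteq(-\infty,0]$.

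For part (b), I will pass to a Witt subalgebra along $\a$. Set
\begin{equation*}
\mathfrak w(\a):=\span\bigl\{L_x:=t^{x\a}(\a_1\partial_1+\cdots+\a_n\partial_n)\,:\,x\in\Z\bigr\}\subset\mathfrak g.
\end{equation*}
The routine bracket computation gives $[L_x,L_y]=(\a\cdot\a)(y-x)L_{x+y}$, and as $\a\cdot\a>0$ this shows $\mathfrak w(\a)$ is, up to rescaling, the centerless Virasoro algebra. The subspace $U:=\bigoplus_{x\in\Z}V_{\mu+x\a}$ is $\mathfrak w(\a)$-invariant, and its $L_0$-eigenspaces are exactly the individual $V_{\mu+x\a}$, with pairwise distinct eigenvalues $\mu(L_0)+x(\a\cdot\a)$; since each $V_{\mu+x\a}$ is finite-dimensional, $U$ is Harish-Chandra over $\mathfrak w(\a)$. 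By Lemma~\ref{lem101} every element of $V$, and therefore of $U$, is a $\mathfrak g$-generalized highest weight element, and via the inclusion $\mathfrak w(\a)_{y\a}\subset\mathfrak g_{y\a}$ this implies $L_yu=0$ for all sufficiently large $y$, so every element of $U$ is in fact a $\mathfrak w(\a)$-GHW element.

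I will finish by invoking Mathieu's classification of simple Harish-Chandra modules over the centerless Virasoro algebra: each such module is either uniformly bounded (intermediate series), highest weight, or lowest weight, and a direct computation in each case shows that intermediate series and lowest weight modules contain no nonzero GHW vector. Since the GHW property passes to quotients, every simple subquotient of $U$ must be a highest weight module, hence has bounded-above support. The finite-dimensionality of each weight space of $U$ then forces the collection of highest weights of its composition factors to itself be bounded above, for otherwise a single weight space of $U$ would receive contributions from infinitely many composition factors. Therefore the support of $U$ as a $\mathfrak w(\a)$-module is bounded above, which is precisely the desired upper bound $m$ on $S$. The main obstacle is this last step: upgrading the $n$-dimensional cone-type GHW condition of Lemma~\ref{lem101} to a one-dimensional upper bound along the ray $\Z\a$ genuinely requires both the passage to $\mathfrak w(\a)$ and the Virasoro classification combined with the Harish-Chandra hypothesis.
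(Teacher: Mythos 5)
Your proof is correct and follows essentially the same route as the paper: both establish downward-closedness via Lemma~\ref{lem107}, then pass to a rank-one Virasoro (Witt) subalgebra sitting along the ray $\mathbb{Z}\alpha$ and use the Harish-Chandra hypothesis together with Mathieu's results to rule out unbounded-above support. The one substantive difference is which input from \cite{M} you invoke: the paper applies \cite[Lemma~1.6]{M} directly, which from the generalized-highest-weight property immediately manufactures genuine highest weight vectors at arbitrarily high levels and yields $\dim V_\mu=\infty$, whereas you invoke the full classification of simple Harish-Chandra Virasoro modules and then must additionally argue (i) that a nonzero GHW vector cannot exist in a simple intermediate-series or nontrivial lowest-weight module, and (ii) that bounded weight multiplicities force the highest weights of the composition factors of $U$ to be bounded above. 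Both of these supplementary points are true and your sketch is adequate, but they constitute extra verification that the paper's choice of citation avoids; your route is a little heavier but entirely sound. Two small remarks: your specific $\partial_\alpha=\sum\alpha_i\partial_i$ works because $\alpha\cdot\alpha>0$, while the paper uses a generic nondegenerate $\partial$ — either choice gives the required Virasoro copy; and when you assert that nontrivial lowest-weight and simple intermediate-series modules have no GHW vector, it is worth noting that the lowest-weight case follows by the same bracket/generation argument that makes GHW elements propagate (if the lowest weight vector is also GHW then the whole algebra annihilates it), while the intermediate-series case reduces to the explicit action formula $L_n v_m=(a+m+bn)v_{m+n}$.
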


\begin{proof}
Set $J:=\{x \in \mathbb{Z}\,:\,\mu+x\alpha \in \supp(V)\}$.
From Lemma~\ref{lem107} we have that either
$J=(-\infty,m]$ for some $m\in \mathbb{Z}$ or $J=\mathbb{Z}$.
Suppose that $J=\mathbb{Z}$. Choose some $\partial\in\mathfrak {g}_0$
such that $\partial (t^\beta)=0$ implies $\beta=0$ for all $\beta\in\mathbb{Z}^n$.

Consider the subalgebra
$\mathfrak{V}_{\alpha}$ of $\mathfrak{g}$, generated by
the elements $t^{k\alpha}\partial$, $k\in\mathbb{Z}$. This
subalgebra is a classical centerless Virasoro algebra (of rank one)
and the space $X_{\alpha}:=\oplus_{x\in\Z} V_{\mu+x\alpha}$
admits a natural structure of a $\mathfrak{V}_{\alpha}$-module,
given by restriction.

From Lemma~\ref{lem101} we obtain that for any
$v\in X_{\alpha}$ we have $t^{k\alpha}\partial(v)=0$ for all
$k\in\mathbb{N}$ big enough. Hence, by \cite[Lemma~1.6]{M}, for every
$m\in\mathbb{Z}$ there exists $m'\in \mathbb{Z}$, $m'>m$,
and a nonzero vector $v(m')\in V_{\mu+m'\alpha}$ annihilated by
$t^{k\alpha}\partial$ for all $k\in \mathbb{N}$.

Therefore the weight $\mu$ occurs in infinitely many simple
highest weight $\mathfrak{V}_{\alpha}$-subquotients of
$X_{\alpha}$, implying $\dim_{\Bbbk}V_{\mu}=\infty$.
This contradicts our assumption that $V$ is a Harish-Chandra
module. Thus $J\neq \mathbb{Z}$ and the claim of the lemma follows.
\end{proof}

\begin{lemma}\label{l3}
Let $V$ be as in Lemma~\ref{l1} satisfying \eqref{gh1}.
Then one can change the variables $t_1,t_2,\dots,t_n$
(keeping the weight $\lambda$) such that
there exists $v_0\in V_{\lambda}$
with the following properties:
\begin{enumerate}[(a)]
\item\label{l3.1} The condition \eqref{gh1} is satisfied.
\item\label{l3.2} $\lambda+\alpha\not\in\mathrm{supp}(V)$ for any
nonzero $\alpha\in\mathbb{Z}_+^n$. \item\label{l3.3}
$\lambda-\alpha\in\mathrm{supp}(V)$ for any
$\alpha\in\mathbb{Z}_+^n$. \item\label{l3.4} for any
$\alpha,\beta\in \mathbb{Z}^n$ such that $\alpha\leq \beta$ we
have $\lambda+\alpha\not\in\mathrm{supp}(V)$  implies
$\lambda+\beta\not\in\mathrm{supp}(V)$.
\end{enumerate}
\end{lemma}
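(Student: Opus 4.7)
The plan is to perform a suitable unimodular change of $\Z$-basis of $\Z^n$ (equivalently, a change of variables $t_i$) and to choose a new generalized highest weight vector $v_0 \in V_\lambda$ so that, in the new coordinates, $\supp(V)$ has a ``staircase'' structure around $\lambda$: nothing strictly above $\lambda$ (b), the closed negative orthant $\lambda - \Z_+^n$ contained in the support (c), and the support downward-closed in $\Z^n$ (d). The key tools are the bracket rule $[\mathfrak{g}_\alpha,\mathfrak{g}_\beta] = \mathfrak{g}_{\alpha+\beta}$ (valid for $\alpha \neq \beta$), Lemmas~\ref{lem101}, \ref{lem107}, \ref{lem102}, and the hypothesis $\lambda \neq 0$.

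First, I would establish (a) and (b) by a change of basis. An induction using the bracket rule propagates $\mathfrak{g}_{e_i} v_0 = 0$ to $\mathfrak{g}_\alpha v_0 = 0$ for every $\alpha \in \Z_+^n$ that is not a positive multiple of a single $e_i$. Combined with Lemma~\ref{lem102} applied to $\mu = \lambda$ in all directions $\alpha \in \N^n$ (bounding $\supp(V)$ from above along each such ray) and with a rank-one Virasoro restriction argument that controls the support on each coordinate axis (using the Harish-Chandra hypothesis), this forces the positive part of the support
\[
S^+ \,:=\, \{\alpha \in \Z_+^n \setminus \{0\} \,:\, \lambda + \alpha \in \supp(V)\}
\]
to be finite. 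A unimodular $\Z$-basis $\{f_1, \ldots, f_n\}$ of $\Z^n$ can then be chosen so that the new positive orthant avoids $S^+$ entirely (taking the $f_i$ to span a narrow simplicial cone inside $\R^n_{+}$ whose rays miss the finitely many rays through points of $S^+$). After this change, (b) holds by construction, and since $\lambda + f_i \notin \supp(V)$ we have $\mathfrak{g}_{f_i} V_\lambda \subseteq V_{\lambda + f_i} = 0$, so any nonzero $v_0 \in V_\lambda$ satisfies (a).

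For (c): given $\alpha \in \Z_+^n \setminus \{0\}$, the identity $[\mathfrak{g}_\alpha, \mathfrak{g}_{-\alpha}] = \mathfrak{g}_0$ (since $\alpha \neq 0$), together with $\mathfrak{g}_\alpha v_0 = 0$ from (b) and $\mathfrak{g}_0 v_0 = \Bbbk v_0 \neq 0$ from $\lambda \neq 0$, gives
\[
\Bbbk v_0 \,=\, \mathfrak{g}_0 v_0 \,\subseteq\, \mathfrak{g}_\alpha \mathfrak{g}_{-\alpha} v_0 \,+\, \mathfrak{g}_{-\alpha} \mathfrak{g}_\alpha v_0 \,=\, \mathfrak{g}_\alpha \mathfrak{g}_{-\alpha} v_0,
\]
forcing $\mathfrak{g}_{-\alpha} v_0 \neq 0$ and hence $\lambda - \alpha \in \supp(V)$. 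For (d): arguing by contradiction, assume $\lambda + \alpha \notin \supp(V)$ but $\lambda + \beta \in \supp(V)$ with $\alpha \leq \beta$; let $w \in V_{\lambda + \beta}$ be nonzero and $\gamma = \beta - \alpha \in \Z_+^n$. When $\gamma \in \N^n$, Lemma~\ref{lem107} applied to $w$ immediately produces a nonzero element of $V_{\lambda + \alpha}$. When $\gamma$ has some zero coordinates, choose $\delta \in \Z^n$ with $\beta + \delta \in \Z_+^n \setminus \{0\}$, so $\mathfrak{g}_\delta w = 0$ by (b); the same bracket identity applied to $w$ (using $\lambda + \beta \neq 0$) then produces $\mathfrak{g}_{-\delta} w \neq 0$, yielding a nonzero weight in $V_{\lambda + \beta - \delta}$. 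Iterating this with successively chosen $\delta$'s that reduce the number of zero coordinates of the analogous difference, and combining with Lemma~\ref{lem107} for the strictly negative part, one eventually places a nonzero weight at $\lambda + \alpha$, contradicting the assumption.

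The principal technical obstacle is the finiteness of $S^+$: while the bracket propagation covers all of $\Z_+^n$ except the coordinate rays $\N e_i$, bounding the support along each such axis requires a further argument (a rank-one Virasoro restriction combined with the Harish-Chandra property, to rule out infinite axial accumulations). The iterative bracket argument for (d) is also delicate, as each step's choice of $\delta$ must simultaneously trigger vanishing by (b) and produce a useful lowering, and the degenerate case $\lambda + \beta = 0$ must be handled separately (it does not occur generically, given $\lambda \neq 0$).
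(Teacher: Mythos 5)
Your overall plan — a unimodular change of basis pushing the new positive orthant into a region where the support vanishes — is the right idea and matches the paper's strategy. But the route you take to justify the choice of basis has a genuine gap, and it makes the remainder of the proof harder than it needs to be.

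The gap is the claim that $S^+ = \{\alpha \in \Z_+^n\setminus\{0\}: \lambda+\alpha\in\supp(V)\}$ is finite. Neither the bracket propagation (which only controls $\mathfrak g_\alpha v_0$, not the weight spaces $V_{\lambda+\alpha}$) nor Lemma~\ref{lem102} (which gives a cutoff $m$ that depends on the ray $\alpha\in\N^n$, with no uniformity) establishes this, and it is in fact false in general: for a generalized Verma module $L(G,\beta,X)$ with, say, $G=\Z e_2$ and $\beta=e_1$, the support contains the whole half-line $\lambda+\Z_+ e_2$, so $S^+$ is infinite. You flag this yourself as "the principal technical obstacle," but the paper does not need it, and chasing it sends you off course. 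What the paper actually does is simpler: apply Lemma~\ref{lem102} once, in the single direction $(1,\dots,1)$, to get a $p$ with $\lambda+(p-1)(1,\dots,1)\notin\supp(V)$. Then choose $e'_i$ componentwise $\geq p(1,\dots,1)$ forming a $\Z$-basis. Any nonzero $\alpha'=\sum c_ie'_i$ with $c_i\geq 0$ satisfies $\alpha'-(p-1)(1,\dots,1)\in\N^n$ (old basis), so by Lemma~\ref{lem102}, $\lambda+\alpha'\in\supp(V)$ would force $\lambda+(p-1)(1,\dots,1)\in\supp(V)$, a contradiction. This gives (b) directly, with no finiteness argument, and the same observation gives (d) in one line: if $\alpha\leq\beta$ in the new basis with $\alpha\neq\beta$, the difference lies in $\N^n$ in the old basis, so Lemma~\ref{lem102} applied at $\mu=\lambda+\beta$ yields $\lambda+\alpha\in\supp(V)$.

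Two further remarks. Your $[\mathfrak g_\alpha,\mathfrak g_{-\alpha}]=\mathfrak g_0$ argument for (c) is correct (and arguably cleaner than the paper's reference to the Lemma~\ref{lem107}-style generation argument), so keep it. On the other hand, your iterative argument for (d) in the case where $\gamma=\beta-\alpha$ has zero coordinates is both unconvincing (it is not clear the successive $\delta$'s can always be chosen to terminate at $\lambda+\alpha$, and you must also avoid the $\lambda+\beta-\delta=0$ degeneracy at every step) and unnecessary: since your own construction takes $f_1,\dots,f_n$ strictly inside $\R_{>0}^n$, any nonzero $\gamma\in\Z_+^n$ in the new coordinates is automatically in $\N^n$ in the old coordinates, so the single application of Lemma~\ref{lem102} just described settles (d) uniformly, with no case distinction.
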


\begin{proof}
By Lemma~\ref{lem102} we have $\{x \in\mathbb{Z}\,:
\,\lambda+x(1,1\cdots,1) \in\mathrm{supp} (V)\}=(\infty, p-2]$
for some $p\geq 2$. Take $e'_1=(p+1,p,\dots,p)$,
$e'_2=(p+2,p+1,p,\dots,p)$
and $e'_i=e'_1+e_i$ for all $i=3,4,\dots,n$. This gives us a new
$\mathbb{Z}$-basis of $\mathbb{Z}^n$. The condition
\eqref{l3.1} is obvious. The condition \eqref{l3.4} follows from
Lemma~\ref{lem102}. The conditions
\eqref{l3.2} and \eqref{l3.3} are proved similarly to the
proof of Lemma~\ref{lem107} (note that $\lambda+(p-1)(1,1,\dots,1)\not
\in\mathrm{supp}(V)$).
\end{proof}

\subsection{The key lemma}\label{s2.4}
The following statement is our key observation.
For $a,b\in\mathbb{Z}^n$, we set $a\cdot b=a_1b_1+a_1b_1+...+a_1b_1$.
For any subgroup $G$ of $\Z^n$ and any $\mu\in\mathfrak{g}_0^*$ the space
$\displaystyle V(\mu+G)=\bigoplus_{\alpha\in \mu+G}V_{\alpha}$ is
naturally a $\mathfrak{g}(G)$-module by restriction.

\begin{lemma}\label{l4}
Assume that $V$ is not uniformly bounded. Then $V$ is a highest weight
module as in Theorem~\ref{mainthm}\eqref{mainthm.3}.
\end{lemma}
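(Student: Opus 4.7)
The plan is to use Lemma~\ref{l3} to normalize $V$, pick a candidate highest weight decomposition $(G,\beta)$, and then verify the highest weight condition. Applying Lemma~\ref{l3}, we may assume (after a change of basis and weight) that there is a vector $v_0\in V_{\lambda}$ with $\mathfrak{g}_{e_i}v_0=0$ for every $i$, such that the support of $V$ is downward closed with $\lambda$ at the corner in the sense of (b)--(d). A free consequence, which strengthens (a) substantially, is that $\mathfrak{g}_\alpha v_0\subseteq V_{\lambda+\alpha}=0$ for every $\alpha\in\mathbb{Z}_+^n\setminus\{0\}$ by (b); thus $v_0$ is annihilated by the entire ``positive orthant'' subspace $\bigoplus_{\alpha\in\mathbb{Z}_+^n\setminus\{0\}}\mathfrak{g}_\alpha$.

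For the highest weight data I would take $\beta=e_1$ and $G=\mathbb{Z}\langle e_2,\ldots,e_n\rangle$, which yields the triangular decomposition $\mathfrak{g}=\mathfrak{n}^-_G\oplus\mathfrak{a}_G\oplus\mathfrak{n}^+_G$ of Subsection~\ref{s1.2} with $\mathfrak{n}^+_G=\bigoplus_{\gamma:\,\gamma_1\geq 1}\mathfrak{g}_\gamma$, and I would set $X:=V(\lambda+G)=\bigoplus_{\alpha\in G}V_{\lambda+\alpha}$, an $\mathfrak{a}_G$-submodule of $V$ containing $v_0$. The claim $V\cong L(G,\beta,X)$ then amounts to showing $\mathrm{supp}(V)\subseteq\lambda+G-\mathbb{Z}_{\geq 0}e_1$, i.e.\ $V_{\lambda+\gamma}=0$ whenever $\gamma_1\geq 1$, equivalently $\mathfrak{n}^+_G v_0=0$.

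The main obstacle is precisely this half-space containment for weights $\gamma$ with $\gamma_1\geq 1$ but some $\gamma_j<0$ for $j\geq 2$; in that regime neither (b) nor (d) is directly applicable. My plan is a proof by contradiction: assuming $V_{\lambda+\gamma}\ne 0$, pick a nonzero $u\in V_{\lambda+\gamma}$ and observe that, by the same target-zero principle used for $v_0$, the element $u$ is annihilated by every $\mathfrak{g}_\delta$ with $\gamma+\delta\in\mathbb{Z}_+^n\setminus\{0\}$. Combining this with the generalized highest weight property (Lemma~\ref{lem101}) and with Lemma~\ref{lem102} applied along several strictly positive directions issuing from $\lambda+\gamma$, I would propagate annihilations until the nonvanishing result Lemma~\ref{lem107} (or an appropriate adaptation) applied to $u$ forces $u=0$, producing the required contradiction. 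I expect this is the delicate core of the argument and may require either refining the basis of Lemma~\ref{l3} or iterating the construction there to push the hypothetical off-axis weight further ``up'' into a region ruled out by (b).

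Once $\mathfrak{n}^+_G v_0=0$ is established, PBW gives $V=U(\mathfrak{n}^-_G)\,U(\mathfrak{a}_G)\,v_0$, so $V$ is generated over $\mathfrak{a}_G$ by $v_0$ inside $X$. Simplicity of $V$ then transfers to simplicity of $X':=U(\mathfrak{a}_G)v_0$ as an $\mathfrak{a}_G$-module, because a proper $\mathfrak{a}_G$-submodule would generate a proper $\mathfrak{g}$-submodule of $V$; the same argument forces $X=X'$. The universal property of the generalized Verma module produces a surjection $M(G,\beta,X)\twoheadrightarrow V$, and simplicity identifies $V$ with the unique simple quotient $L(G,\beta,X)$. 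Finally, since $V$ is Harish-Chandra, the results of \cite{BZ} force $X$ to be uniformly bounded, completing the verification that $V$ is of the form required by Theorem~\ref{mainthm}\eqref{mainthm.3}.
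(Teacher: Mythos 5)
There is a genuine gap, and it sits exactly where you flag it: the ``delicate core'' is left as a sketch, but this step is essentially the entire content of Lemma~\ref{l4}, and the sketch does not survive scrutiny.

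First, the choice $\beta=e_1$, $G=\mathbb{Z}\langle e_2,\dots,e_n\rangle$ is not justified by Lemma~\ref{l3} and is in general the wrong pair. Lemma~\ref{l3} only guarantees that $\lambda+\mathbb{Z}_+^n\setminus\{0\}$ misses $\mathrm{supp}(V)$; it says nothing about the half-space $\{\gamma:\gamma_1\geq 1\}$. The ``hyperplane'' $G$ that actually carries a highest-weight decomposition of $V$ is a datum of the module, and in the paper it emerges as $\{x:\alpha\cdot x=0\}$ for some $\alpha\in\mathbb{N}^n$ (so, e.g., $G$ could be $\{x:x_1+\dots+x_n=0\}$, not $\mathbb{Z}\langle e_2,\dots,e_n\rangle$). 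With your fixed naive $(G,\beta)$ it may well happen that $\{\lambda+k\beta+G\}\cap\mathrm{supp}(V)\neq\varnothing$ for all $k\in\mathbb{N}$, in which case $V$ is not a highest weight module for that decomposition and the surjection $M(G,\beta,X)\twoheadrightarrow V$ you invoke does not exist.

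Second, even granting a correct $(G,\beta)$, the proof-by-contradiction you outline (``propagate annihilations until Lemma~\ref{lem107} forces $u=0$'') is not an argument: the crucial off-orthant region $\gamma_1\geq 1$, some $\gamma_j<0$, is exactly where \ref{l3}(b) and \ref{l3}(d) give nothing, and Lemma~\ref{lem107} requires $\alpha\in\mathbb{N}^n$, so it cannot directly touch such $\gamma$. What the paper does instead is extract $(G,\beta)$ from a simple $\mathrm{Vir}(\gamma)$-subquotient through $V_\lambda$ via \cite[Theorem~3.9]{LZ}, which produces the half-lattice inclusion $(\lambda-\mathbb{Z}_+\beta+G)\setminus\{0\}\subset\mathrm{supp}(V)$ essentially for free; it then argues that the layers $\lambda+k\beta+G$ ($k\in\mathbb{N}$) cannot all meet the support, by rerunning the Lemma~\ref{l1} construction inside the $\mathfrak{g}(G)$-module $V(\lambda+k\beta+G)$ to manufacture a contradiction with that half-lattice inclusion. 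A minimal $k$ with $\{\lambda+k\beta+G\}\cap\mathrm{supp}(V)=\varnothing$ then exists, and the top nonempty layer furnishes $X'$ with $V\cong L(G,\beta,X')$. Your proposal would need this kind of ``determine $G$ from the module, then argue no infinite tower of layers'' structure; without it, the key half-space containment is unsupported.
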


\begin{proof}
By Lemma~\ref{l3} we may assume that $V$ satisfies
Lemma~\ref{l3}\eqref{l3.1}--\eqref{l3.4}.

Fix $\gamma\in\Bbbk^n$ such that $\gamma_1, \gamma_2,\cdots, \gamma_n$
are linearly independent over $\mathbb{Q}$ and consider
$V$ as a Harish-Chandra $\mathrm{Vir}(\gamma)$-module by restriction.
To simplify our notation, set $\mathfrak{a}:=\mathrm{Vir}(\gamma)$.
Let $Y$ be a minimal $\mathfrak{a}$-submodule of $V$ such
that $Y\cap V_\lambda\neq 0$, and $Z$  the maximal
$\mathfrak{a}$-submodule of $Y$ such that $Z\cap V_\lambda=0$.
Then the  $\mathfrak{a}$-module $Y/Z$ is simple.

As both $\lambda\neq 0$ and $V_\lambda\neq 0$ by Lemma~\ref{l3},
from \cite[Theorem~3.9]{LZ} it follows that the
$\mathfrak{a}$-module $Y/Z$ is isomorphic to
$L^{\mathfrak{a}}(G,\beta,X)$ for some $G$, $\beta$ and $X$ as
described in Subsection~\ref{s1.2} (but for the algebra
$\mathfrak{a}$ and after the identification of $\lambda$ with
$\lambda(\gamma_1\partial_1+\gamma_2\partial_2+\cdots+\gamma_n\partial_n)$).
Moreover, $X$ is uniformly bounded (and hence is a module from the
intermediate series). It now follows that
\begin{equation}\label{eqno5}
\left(\lambda-\mathbb{Z}_+ \beta + G\right)\setminus\{0\}
\subset \supp(V).
\end{equation}
Moreover, from  Lemma~\ref{l3}\eqref{l3.2} we have $\lambda+\alpha
+ G\not\subset \supp(V)$ for any nonzero
$\alpha\in\mathbb{Z}_+^n$.

From  Lemma~\ref{l3}\eqref{l3.2} it follows that there exists
$\alpha\in \mathbb{N}^n$ such that we have
$G=\{x\in\mathbb{Z}^n\,:\, \alpha\cdot x=0\}$. (For example, if
$\alpha_1=0$, then $le_1\in G$ for all $l\in\Z$, and $\lambda
+l_1e_1\notin \mathrm{supp}(V)$ for sufficiently large $l_1$ which
contradicts the fact that $(\lambda+ G)\setminus\{0\}\subset
\supp(V)$.) It follows from \eqref{eqno5} that $\lambda+x\in
\supp(V)$ for all $x\in\mathbb{Z}^n$ with $x\cdot\alpha<0$.

We first consider the case when
$\{\lambda+k\beta+G\}\cap\mathrm{supp}(V)=\varnothing$
for some $k\in\mathbb{N}$. We may assume that $k$ is minimal possible.
In this case for any $\mu\in \{\lambda+(k-1)\beta+G\}\cap\mathrm{supp}(V)$
(note that the latter intersection is not empty because of our
assumption on $k$)
and any $x\in G$ we have $\mathfrak{g}_{x+\beta}V_{\mu}=0$. Hence
$V\cong L(G,\beta,X')$, where
\begin{displaymath}
X'=\bigoplus_{\mu\in \{\lambda+(k-1)\beta+G\}}V_{\mu}.
\end{displaymath}

Now consider the remaining case when
$\{\lambda+k\beta+G\}\cap\mathrm{supp}(V)\neq \varnothing$
for all $k\in\mathbb{N}$. Obviously, we can choose $k\in\mathbb{N}$
big enough such that the following two conditions are satisfied:
\begin{enumerate}[(I)]
\item\label{cond1}
$|\{\lambda+k\beta+G\}\cap \{\lambda+\mathbb{Z}_+^n\}|>1$
\item\label{cond2}
$|\{\lambda+(k-1)\beta+G\}\cap \{\lambda+\mathbb{Z}_+^n\}|>0$.
\end{enumerate}

The space $V(\lambda+k\beta+G)$ is a Harish-Chandra
$\mathfrak{g}(G)$-module. Restricting this module to any
generalized Virasoro subalgebra of $\mathfrak{g}(G)$ (of the same
rank) and using \eqref{cond1}, \cite{LZ} and \cite{BZ} in the same
way as we did in the proof of Lemma~\ref{l1}, we get that this
module is not uniformly bounded. Hence we can repeat the arguments
of Lemma~\ref{l1} and find a $\mathbb{Z}$-basis
$\beta_2,\beta_3,\dots,\beta_n$ of $G$, $\mu\in \lambda+k\beta+G$,
and a nonzero element $v\in V_{\mu}$, such that
$\mathfrak{g}_{\beta_i}v=0$ for all $i=2,3,\dots,n$.

Let $\nu\in \{\lambda+(k-1)\beta+G\}\cap \{\lambda+\mathbb{Z}_+^n\}$,
which exists by \eqref{cond2}. Then $\nu\not\in\mathrm{supp}(V)$
by Lemma~\ref{l3}\eqref{l3.2}. Let $\beta'_1=\nu-\mu$. Then
$\beta'_1,\beta_2,\beta_3,\dots,\beta_n$ is a $\mathbb{Z}$-basis of
$\mathbb{Z}^n$ and $\mathfrak{g}_{\beta'_1}v=0$ as well.

From Lemma~\ref{lem102} we obtain that
$\mu+m(\beta'_1+\beta_2+...+\beta_n)\not\in\mathrm{supp}(V)$ for
all sufficiently large integers $m$. At the same time for
$m\in\mathbb{N}$ we have
\begin{displaymath}
\alpha\cdot m(\beta'_1+\beta_2+...+\beta_n)=
m\alpha\cdot \beta'_1=-m\alpha\cdot \beta<0.
\end{displaymath}
Hence for all $m$ sufficiently large we have
\begin{displaymath}
\mu+ m(\beta'_1+\beta_2+...+\beta_n)
\in \lambda-\mathbb{Z}_+ \beta + G.
\end{displaymath}
This contradicts \eqref{eqno5}. Hence it is not possible that the
intersection
$\{\lambda+k\beta+G\}\cap\mathrm{supp}(V)$ is nonempty
for all $k\in\mathbb{N}$. The claim of the lemma follows.
\end{proof}

\subsection{Proof of Theorem~\ref{mainthm}}\label{s2.5}

From Lemma~\ref{lem102} it follows that every simple dense and
every simple punctured $\mathfrak{g}$-module is uniformly bounded.
Let now $V$ be a simple nontrivial Harish-Chandra
$\mathfrak{g}$-module, which is neither dense nor punctured. Let
$\gamma\in\Bbbk^n$ be such that $\gamma_1,\gamma_2,\dots,\gamma_n$
are linearly independent over $\mathbb{Q}$. Then $V$ is a
$\mathrm{Vir}(\gamma)$-module by restriction. As $V$ is neither
dense nor punctured, from \cite[Theorem~3.9]{LZ} it follows that
every simple nontrivial subquotient of $V$ is a highest weight
module. In particular, $V$ is not uniformly bounded by \cite{BZ}.
From Lemma~\ref{l4} we now get that $V$ is a highest weight
$\mathfrak{g}$-module as described in
Theorem~\ref{mainthm}\eqref{mainthm.3}. This completes the proof.

\section{Support of non Harish-Chandra weight modules}\label{s3}

In this section we would like to prove some analogue of
Theorem~\ref{mainthm} for all weight modules (that is without the
assumption to be a Harish-Chandra module). At this moment, we
cannot get such a nice statement as the one in
Theorem~\ref{mainthm}, but we  get some information about the
support of the module generalizing the corresponding result for
the Virasoro algebra (\cite[Theorem~2]{Ma3}).

Actually in this section our algebra $\mathfrak {g}$ can be more
general than $W_n$. We assume that $\mathfrak {g}$ has a
$\Z^n$-gradation   $\displaystyle\mathfrak {g}=
\bigoplus_{\alpha\in\Z^n}\mathfrak
{g}_\alpha$ such that $\mathfrak {g}_0$ is abelian and the gradation
itself is  the root space decomposition with respect to $\mathfrak {g}_0$.
We also assume that $[\mathfrak {g}_\alpha,\mathfrak {g}_\beta]=\mathfrak
{g}_{\alpha+\beta}$ for all $\alpha,\beta\in\Z^n$ with $\alpha\ne
\beta$. Clearly both $W_n$ and higher rank Virasoro algebras are 
examples of such Lie algebras.

\subsection{Cut modules}\label{s3.1}

For $a\in\mathbb{R}^n$ set
\begin{gather*}
\mathbb{Z}^{(a)}_-=\{x\in \mathbb{Z}^n:a\cdot x<0\};\\
\mathbb{Z}^{(a)}_+=\{x\in \mathbb{Z}^n:a\cdot x>0\};\\
\mathbb{Z}^{(a)}_0=\{x\in \mathbb{Z}^n:a\cdot x=0\};\\
\mathbb{Z}^{(a)}_{-0}=\{x\in \mathbb{Z}^n:a\cdot x\leq 0\};\\
\end{gather*}
Following \cite{Ma1,Ma2} we call a simple weight $\mathfrak{g}$-module
$V$ {\em cut} provided that there exists $\lambda\in\mathrm{supp}(V)$,
$a\in\mathbb{R}^n$, $a\neq 0$, and $b\in \mathbb{Z}^n$ such that
$\mathrm{supp}(V)\subset \lambda+b+\mathbb{Z}^{(a)}_{-0}$.

Let $L(G,\beta,X)$ be a simple highest weight module as in
Subsection~\ref{s1.2}. It is easy to see that there exists
$a\in\mathbb{R}^n$, $a\neq 0$, such that $G\subset
\mathbb{Z}^{(a)}_0$ and $\beta\in \mathbb{Z}^{(a)}_+$ (actually in
this case we can even take $a\in\Z^n$).  For any
$\lambda\in\mathrm{supp}(X)$ we have
$\mathrm{supp}(L(G,\beta,X))\subset\lambda+\mathbb{Z}^{(a)}_{-0}$,
in particular, $L(G,\beta,X)$ is a cut module (where we take $b=0$).

In the general case one cannot get rid of the element $b$ in the
definition of cut modules. Choose some $a\in\mathbb{R}^n$, $a\neq
0$, such that $\mathbb{Z}^{(a)}_0=\{0\}$. Set
\begin{displaymath}
\mathfrak{n}^{\pm}=\bigoplus_{\alpha\in \mathbb{Z}^{(a)}_{\pm}}
\mathfrak{g}_{\alpha}.
\end{displaymath}
We make $\Bbbk$ as the trivial $\mathfrak{b}=
\mathfrak{g}_{0}\oplus\mathfrak{n}^{+}$-module. Then the kernel of
the canonical epimorphism from the Verma module
$U(\mathfrak{g})\otimes_{U(\mathfrak{b})}\Bbbk$ onto the trivial
$\mathfrak{g}$-module contains a weight irreducible subquotient
with support $\mathbb{Z}^{(a)}_-$ since $\mathfrak{g}$ contains a
rank $n$ Virasoro algebra(using \cite{HWZ}).

Our main result of this subsection is the following:

\begin{theorem}\label{thm31}
Let $V$ be a simple weight $\mathfrak{g}$-module, which is neither dense
nor trivial. Assume that $V$ contains a generalized highest weight element
for some choice of variables $t_1,\dots,t_n$. Then $V$ is a cut module.
\end{theorem}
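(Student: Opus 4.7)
The plan is to extend the three key lemmas of Subsection~\ref{s2.2} from the Harish-Chandra setting over $\mathfrak{W}_n$ to arbitrary simple weight modules over the $\Z^n$-graded Lie algebras $\mathfrak{g}$ satisfying the standing assumption of Section~\ref{s3}, and then use these to obtain the half-space containment demanded by the cut condition.

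After the change of variables from the hypothesis, fix a generalized highest weight element $v_0 \in V_{\lambda}$ with $\mathfrak{g}_{\alpha}v_0 = 0$ for all $\alpha > (N,\ldots,N)$. The proof of Lemma~\ref{lem101} uses only the inclusion $[\mathfrak{g}_{\alpha},\mathfrak{g}_{\beta}] \subset \mathfrak{g}_{\alpha+\beta}$ and so applies verbatim: every vector of $V$ is a generalized highest weight element. The argument of Lemma~\ref{lem107} also goes through: if $\mathfrak{g}_{-\alpha}v = 0$ for some nonzero $v \in V$ and some $\alpha \in \mathbb{N}^n$, then by the generalized highest weight property of $v$ one may choose $M$ large enough that $\mathfrak{g}_{e_i + M\alpha}v = 0$ for every $i$; the standing hypothesis $[\mathfrak{g}_{\rho},\mathfrak{g}_{\sigma}] = \mathfrak{g}_{\rho+\sigma}$ for $\rho \neq \sigma$ is exactly what is needed to check that $\mathfrak{g}$ is Lie-generated by $\mathfrak{g}_{-\alpha}$ together with the $\mathfrak{g}_{e_i + M\alpha}$, forcing $V = \Bbbk v$ to be trivial by simplicity, a contradiction. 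This yields strict downward closure of $\supp(V)$: $\mu \in \supp(V)$ implies $\mu - \mathbb{N}^n \subset \supp(V)$. The analogue of Lemma~\ref{lem102}---that $J_{\mu,\alpha}$ is bounded above for every $\mu \in \supp(V)$ and $\alpha \in \mathbb{N}^n$---then follows by replacing the appeal to finite-dimensional weight spaces by non-density: if $J_{\mu,\alpha} = \Z$, then descending by $\mathbb{N}^n$ from the points $\mu + x\alpha$ as $x \to +\infty$ fills the entire lattice $\lambda + \Z^n$, contradicting that $V$ is not dense.

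The main obstacle is upgrading these closure and one-dimensional boundedness properties to the uniform half-space containment demanded by cut: the bound $\max J_{\mu,\alpha}$ is a priori not uniform in $\mu$, whereas cut requires a single direction $a \in \R^n$ and offset $b \in \Z^n$ with $\supp(V) \subset \lambda + b + \Z^{(a)}_{-0}$. Following the strategy of Lemma~\ref{l4}, I would restrict $V$ to a generalized Virasoro subalgebra $\mathrm{Vir}(\gamma)$ for $\gamma \in \Bbbk^n$ with $\Q$-linearly independent coordinates, pick a minimal $\mathrm{Vir}(\gamma)$-subquotient meeting $V_{\lambda}$, and invoke the higher-rank Virasoro classification of~\cite{LZ} combined with the strict downward closure to extract the half-space direction. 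The subtle point distinguishing this from Lemma~\ref{l4}'s proof is handling subquotients that are not uniformly bounded---automatic in the Harish-Chandra case via~\cite{BZ}, but here must be ruled out using only the generalized highest weight assumption and simplicity of $V$.
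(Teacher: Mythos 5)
Your opening observations are sound: Lemma~\ref{lem101} does generalize verbatim, and a Lemma~\ref{lem107}-type argument does yield the downward closure $\mu \in \supp(V) \Rightarrow \mu - \mathbb{N}^n \subset \supp(V)$ using only the hypothesis $[\mathfrak{g}_\alpha, \mathfrak{g}_\beta] = \mathfrak{g}_{\alpha+\beta}$ for $\alpha \neq \beta$. Your replacement of the finite-dimensionality appeal in Lemma~\ref{lem102} by non-density also works. But these are not actually the tools the paper uses for Theorem~\ref{thm31}, and the part of your plan that is meant to carry the real weight has a genuine gap.

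The gap is precisely the ``subtle point'' you flag but do not resolve: your plan to ``restrict $V$ to $\mathrm{Vir}(\gamma)$, pick a minimal subquotient meeting $V_\lambda$, and invoke~\cite{LZ}'' cannot get off the ground, because~\cite{LZ} classifies \emph{Harish-Chandra} modules over higher-rank Virasoro algebras, and in the setting of Theorem~\ref{thm31} the weight spaces of $V$ may be infinite-dimensional. The problem is not that there are ``not uniformly bounded'' subquotients that need to be ruled out; it is that the restricted module $V|_{\mathrm{Vir}(\gamma)}$ is not a Harish-Chandra module at all, so the classification and the triangular-decomposition consequences you want to read off from it are simply unavailable. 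There is no obvious way to extract a Harish-Chandra subquotient containing the relevant weight vectors, and the argument of Lemma~\ref{l4} depends on~\cite{BZ} and~\cite{LZ} at several points beyond the one you single out. This is exactly why the paper does \emph{not} reuse the Lemma~\ref{l4} machinery for Theorem~\ref{thm31}.

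The paper's route is quite different and avoids~\cite{LZ} entirely. It fixes $\Lambda = \lambda + \mathbb{Z}^n$ and $\hat{\Lambda} = \Lambda \setminus \supp(V)$ and proves a \emph{convexity} property (Lemma~\ref{lem34}): any lattice point $\mu$ that is a convex combination of elements of $\hat{\Lambda}$ and is not $0$ must itself lie in $\hat{\Lambda}$. The proof of that lemma exploits the relation $[\mathfrak{g}_\alpha, \mathfrak{g}_\beta] = \mathfrak{g}_{\alpha+\beta}$ to show that a nontrivial positive-integer relation $\sum b_i(\mu_i - \mu) = 0$ forces $\mathfrak{g}_0$ into the subalgebra generated by the $\mathfrak{g}_{\mu_i - \mu}$, whence $\mathfrak{g}_0 V_\mu = 0$. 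From Lemma~\ref{lem34} one gets $\overline{\Lambda} \neq \mathbb{R}^n$ and then the hyperplane separation theorem supplies the direction $a$ with $\hat{\Lambda} \subset \lambda + \mathbb{Z}^{(a)}_+$ directly. This is exactly the ``uniform half-space direction'' you correctly identified as the missing ingredient; your plan gives no way to produce it. The remaining Lemmas~\ref{lem051}--\ref{lem036} then show that $\hat{\Lambda}$ is closed under translation by $\mathbb{Z}^{(a)}_+$ (a semigroup-generation argument in the spirit of Lemma~\ref{lem107}, but relative to the half-space defined by $a$ rather than to $\mathbb{N}^n$), and the cut condition follows immediately. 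If you want to salvage your outline, the way forward is to replace the~\cite{LZ}-restriction step with a convexity argument of this type.
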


\begin{proof}
For $n=1$ this is proved in  \cite[Theorem~2]{Ma3}, so in what follows
we assume $n>1$. By Lemma~\ref{lem101}, every element of $V$ is a
generalized highest weight element with respect to our fixed
choice of $t_1,\dots,t_n$. We will use real convexity theory in our
arguments, so we will need to fix the corresponding setup.

Fix $\lambda\in \mathrm{supp}(V)$, $\lambda\neq 0$, and consider the set
$\Lambda=\lambda+\mathbb{Z}^n$, which we identify with $\mathbb{Z}^n$
and consider as a subset of $\mathbb{R}^n$. Set further
$\hat{\Lambda}=\Lambda\setminus \mathrm{supp}(V)$ and note that
$\hat{\Lambda}\neq \varnothing$ because of our assumption that
$V$ is not dense. The key point of the proof is the following
observation, which establishes a convexity property for $\hat{\Lambda}$:

\begin{lemma}\label{lem34}
Let $\mu\in \Lambda$ be a convex linear combination of some elements
from $\hat{\Lambda}$. Then either $\mu=0$ or $\mu\in \hat{\Lambda}$.
\end{lemma}

\begin{proof}
Assume that
\begin{equation}\label{eq301}
\mu=\sum_{i=1}^k a_i\mu_i,
\end{equation}
where $\mu_i\in \hat{\Lambda}$, $a_i\in\mathbb{R}$, $a_i>0$, $k>1$
and $\sum_{i=1}^k a_i=1$. We may even assume that $k$ is minimal
possible. The latter says that $\mu$ belongs to the interior of the
convex hull $H$ of the $\mu_i$'s. By \cite[Corollary~2.7.2]{La} we
may assume that $\mu_i-\mu$, $i=1,\dots,k$, are affinely
independent.

Denote by $X$ the subspace of $\mathbb{R}^n$, generated by the
elements $\mu_i-\mu$, $i=1,\dots,k$. Then $H-\mu\subset X$ and
since $\mu$ is a point in the interior of $H$, we  get that the
convex cone in $X$ with origin in $\mu$, which contains all
$\mu_i-\mu$, $i=1,\dots,k$, coincides with $X$. By
\cite[Lemma~2.6.2]{La} we have $\dim X=k-1$ and hence without loss
of generality we may assume that the elements $\mu_i-\mu$,
$i=2,\dots,k$, are linearly independent. Then from \eqref{eq301}
we have
\begin{equation}\label{eq302}
-(\mu_1-\mu)=\sum_{j=2}^{k} \frac{a_i}{a_1}(\mu_i-\mu).
\end{equation}
As the vectors $\mu_i-\mu$, $i=2,\dots,k$, are linearly independent,
the equation \eqref{eq302} gives the unique linear combination of
these elements, which is equal to $-(\mu_1-\mu)$. Since all involved
elements $\mu_i, \mu$ are from $\mathbb{Z}^n$, it follows that all
$\frac{a_i}{a_1}$ in \eqref{eq302} are rational numbers (and are
positive). Multiplying, if necessary, with the denominator, we
obtain the equality
\begin{equation}\label{eq303}
-b_1(\mu_1-\mu)=\sum_{j=2}^{k} b_i(\mu_i-\mu),
\end{equation}
where all $b_i$'s are positive integers.
%It is easy to see that
%for $k=2$ we may always assume $b_2=1$.

Note that for every $i=1,2,\dots,k$ we have
$\mathfrak{g}_{\mu_i-\mu}V_{\mu}\subset V_{\mu_i}=0$ by our
assumptions. From  \eqref{eq303} we see that $\mathfrak{g}_{0}$ is
in the Lie subalgebra generated by all $\mathfrak{g}_{\mu_i-\mu},
i=1,2,...,k$. Therefore $\mathfrak{g}_{0}V_{\mu}=0$. This implies $\mu=0$
or $V_{\mu}=0$ and completes the proof of our lemma.
\end{proof}

Let $\overline{\Lambda}$ denote the convex hull of $\hat{\Lambda}$
under the usual topology in $\mathbb{R}^n$. As the module $V$ is not
dense, we have $\overline{\Lambda}\neq \varnothing$. As the module
$V$ is not trivial, it must contain a nonzero weight and hence
$\overline{\Lambda}\neq \mathbb{R}^n$ by Lemma~\ref{lem34}. From the
hyperplane separation theorem (see e.g. \cite[3.2]{La}) it   follows
that for every $\mu\in\mathrm{supp}(V)\setminus\{0\}$ there exists
$a_{\mu}\in\mathbb{R}^n$, $a_{\mu}\neq 0$, such that
$\hat{\Lambda}\subset \mu+\mathbb{Z}_+^{(a_{\mu})}$. Thus for the
fix $\lambda$ there is a nonzero $a\in\mathbb{R}^n$ such that
$\hat{\Lambda}\subset \lambda+\mathbb{Z}_+^{(a)}$.

To proceed we will need a simple fact about ordered abelian groups.

\begin{lemma}\label{lem051}
Let $\beta\in \mathbb{Z}^n$ be such that $\beta\cdot a<0$. Then
there exists a finite collection  of elements
$\beta^{\pm}_1,\dots,\beta^{\pm}_{n}$ from $\mathbb{Z}_+^{(a)}$ such
that $\{\beta,\beta^{\pm}_1,\dots,\beta^{\pm}_n\}$ generates
$\mathbb{Z}^n$ as a semigroup.
\end{lemma}

\begin{proof}
As $\beta\cdot a<0$, for every $\alpha\in \mathbb{Z}^n$ we can fix
some $k_{\alpha}\in\{0,1,\dots\}$ such that
$\alpha-k_{\alpha}\beta\in \mathbb{Z}_+^{(a)}$.
For $i\in\{1,\dots,n\}$ set $\beta^{\pm}_i=\pm e_i-k_{\pm e_i}\beta$.
Then all elements $\pm e_i$ belong to the semigroup, generated by
$\{\beta,\beta^{\pm}_1,\dots,\beta^{\pm}_n\}$, by construction and
hence the latter set generates the whole $\mathbb{Z}^n$ as a semigroup.
\end{proof}

For every $\beta\in \mathbb{Z}^n$ with $\beta\cdot a<0$, we fix
$\beta^{\pm}_1,\dots,\beta^{\pm}_{n}$ given by Lemma~\ref{lem051}.
Now we can generalize our analysis from Section~\ref{s2}.

\begin{lemma}\label{lem033}
Let $\beta\in \mathbb{Z}^n$ be such that $\beta\cdot a<0$. Then
there exists $\mu\in \mathrm{supp}(V)$ such that
$\mu-\beta\in \hat{\Lambda}$.
\end{lemma}

\begin{proof}
Let $\nu\in \hat{\Lambda}\neq\varnothing$. Consider the ray
$\{\nu+k\beta:k\in\mathbb{N}\}$. As $\beta\cdot a<0$, this ray must
intersect $\lambda+\mathbb{Z}_-^{(a)}\subset \mathrm{supp}(V)$.
The claim follows.
\end{proof}

\begin{lemma}\label{lem034}
Let $v\in V$ be a nonzero weight vector. Assume that there exists
$\beta\in \mathbb{Z}^n$ such that $\beta\cdot a<0$ and
$\mathfrak{g}_{\beta}v=0$. Then for any $w\in V$ there exists $N\in
\mathbb{N}$ such that for any $\alpha=b\beta+\gamma$, where
$b\in\mathbb{N}$ and $\gamma\in\mathbb{N}^n$ with
$\gamma>(N,N,\dots,N)$, we have $\mathfrak{g}_{\alpha}w=0$.
\end{lemma}

\begin{proof}
From Lemma~\ref{lem101} we know that $v$ is a generalized highest
weight element and hence there exists $N_1\in \mathbb{N}$ such
that we have $\mathfrak{g}_{\gamma}v=0$ for all
$\gamma\in\mathbb{N}^n$, $\gamma>(N_1,N_1,\dots,N_1)$. If not all
components $\beta_i$ of $\beta$ are negative, using that
$[\mathfrak{g}_{x},\mathfrak{g}_{y}]= \mathfrak{g}_{x+y}$ if $x\ne
y$, in the case $w=v$ the claim of the lemma follows by induction
on $b$ and taking $N=N_1$. Now suppose all components $\beta_i$ of
$\beta$ are negative. In the case $w=v$ we take $N=2N_1+2$. For
$\gamma
>(N,N,\dots,N)$ with $0\notin \mathbb{N}\beta+\gamma$ the claim of
the lemma follows by induction on $b$. For $\gamma
>(N,N,\dots,N)$ with $\gamma=-k\beta, k>0$, we write $\gamma=\gamma_1+\gamma_2$ such that
$\gamma_1,\gamma_2>(N,N,\dots,N)$, $0\notin
\mathbb{N}\beta+\gamma_1$ and  $\gamma_2\ne b\beta+\gamma_1$. Then
we know that $\mathfrak{g}_{b\beta+\gamma_1}v=0$, further
$\mathfrak{g}_{b\beta+\gamma}v=0$. The claim in  the Lemma follows
for the case $w=v$.

Using the same arguments as in Lemma~\ref{lem101} one shows that if the
claim of our lemma is true for some $w$, it is true for any element
from $\mathfrak{g}_{x}w$, $x\in\mathbb{Z}^n$. The general claim of the
lemma now follows from the fact that $V$ is simple and hence generated
by $v$.
\end{proof}

\begin{lemma}\label{lem035}
Let $v\in V$ be a nonzero weight vector. Assume that there exists
$\beta\in \mathbb{Z}^n$ such that $\beta\cdot a<0$ and
$\mathfrak{g}_{\beta}v=0$. Then there exists $N\in \mathbb{N}$ such
that for any $\alpha=b\beta+\sum_{\varepsilon\in\{\pm\}}\sum_{i=1}^n
b^{\varepsilon}_i\beta^{\varepsilon}_i+\gamma$, where
$b,b^{\pm}_i\in\mathbb{N}$, $i=1,\dots,n$, and
$\gamma\in\mathbb{N}^n$ with $\gamma>(N,N,\dots,N)$, we have
$\mathfrak{g}_{\alpha}v=0$.
\end{lemma}

\begin{proof}
By Lemma~\ref{lem033} for every $\varepsilon\in\{\pm\}$ and
$i\in\{1,2,\dots,n\}$ there is
$\mu_{i}^{\varepsilon}\in\mathrm{supp}(V)$ such that
$\mu_{i}^{\varepsilon}+\beta^{\varepsilon}_i\in \hat{\Lambda}$.
This yields
$\mathfrak{g}_{\beta^{\varepsilon}_i}V_{\mu_{i}^{\varepsilon}}=0$.
Hence, by Lemma~\ref{lem034}, there exists
$N_{i}^{\varepsilon}\in\mathbb{N}$ such that
\begin{equation}\label{eq50501}
\mathfrak{g}_{b_{i}^{\varepsilon}\beta^{\varepsilon}_i+
\gamma_{i}^{\varepsilon}}v=0
\end{equation}
for any  $b_{i}^{\varepsilon}\in \mathbb{N}$ and any
$\gamma_{i}^{\varepsilon}\in\mathbb{N}^n$, $\gamma_{i}^{\varepsilon}>
(N_{i}^{\varepsilon},N_{i}^{\varepsilon},\dots,N_{i}^{\varepsilon})$.

Similarly, there exists $N_{\beta}\in\mathbb{N}$
such that
\begin{equation}\label{eq50502}
\mathfrak{g}_{b\beta+\gamma'}v=0
\end{equation}
for any $b\in\mathbb{N}$ and any $\gamma'\in\mathbb{N}^n$,
$\gamma'>(N_{\beta},N_{\beta},\dots,N_{\beta})$. Let
\begin{displaymath}
N= N_{\beta}+\sum_{\varepsilon\in\{\pm\}}
\sum_{i=1}^n N_{i}^{\varepsilon} +2n+1.
\end{displaymath}
Then every $\gamma>(N,N,\dots,N)$ can be written as
\begin{displaymath}
\gamma=\gamma'+ \sum_{\varepsilon\in\{\pm\}}
\sum_{i=1}^n \gamma_{i}^{\varepsilon},
\end{displaymath}
where $\gamma'>(N_{\beta},N_{\beta},\dots,N_{\beta})$ and
$\gamma_{i}^{\varepsilon}>
(N_{i}^{\varepsilon},N_{i}^{\varepsilon},\dots,N_{i}^{\varepsilon})$.
For this choice of $N$ using the expression
$$b\beta+\sum_{\varepsilon\in\{\pm\}}\sum_{i=1}^n
b^{\varepsilon}_i\beta^{\varepsilon}_i+\gamma=(b\beta+\gamma')+\sum_{\varepsilon\in\{\pm\}}\sum_{i=1}^n
(b^{\varepsilon}_i\beta^{\varepsilon}_i+\gamma^{\varepsilon}_i),$$
we deduce the claim of the lemma from \eqref{eq50501} and
\eqref{eq50502}.
\end{proof}

\begin{lemma}\label{lem036}
Let $\mu\in \hat{\Lambda}$ and $\beta\in\mathbb{Z}^n$ be such that
$\beta\cdot a<0$. Then $\mu-\beta\in \hat{\Lambda}.$
\end{lemma}

\begin{proof}
We have $\mathfrak{g}_{\beta}V_{\mu-\beta}\subset V_{\mu}=0$.
Suppose $V_{\mu-\beta}\ne0$ and fix any nonzero $v\in
V_{\mu-\beta}$. By Lemma~\ref{lem035},
 there exists $N\in \mathbb{N}$
such that for any $\alpha=b\beta+\sum_{i=1}^n
b^{\pm}_i\beta^{\pm}_i+\gamma$, where $b,b^{\pm}_i\in\mathbb{N}$,
$i=1,\dots,n$, and $\gamma\in\mathbb{N}^n$ with
$\gamma>(N,N,\dots,N)$, we have $\mathfrak{g}_{\alpha}v=0$.

Since $\{\beta,\beta^{\pm}_1,\dots,\beta^{\pm}_n\}$ generate $\mathbb{Z}^n$
as a semigroup (Lemma~\ref{lem051}), for any $\alpha\in \mathbb{Z}^n$
we can write
\begin{displaymath}
\alpha-(N+1,N+1,\dots,N+1)=b\beta+ \sum_{\varepsilon\in\{\pm\}}
\sum_{i=1}^n b_{i}^{\varepsilon}\beta^{\pm}_i
\end{displaymath}
for some $b,b^{\pm}_i\in\mathbb{N}$ and hence
\begin{displaymath}
\alpha=b\beta+ \sum_{\varepsilon\in\{\pm\}}
\sum_{i=1}^n b_{i}^{\varepsilon}\beta^{\pm}_i +(N+1,N+1,\dots,N+1).
\end{displaymath}
Note that $(N+1,N+1,\dots,N+1)>(N,N,\dots,N)$, which yields
$\mathfrak{g}_{\alpha}v=0$ from the previous paragraph. Thus $V$
is a trivial module which is a contradiction. Therefore
$V_{\mu-\beta}=0$ which completes the proof.
\end{proof}

Theorem~\ref{thm31} follows directly from Lemma~\ref{lem036}.
\end{proof}

\subsection{Case $n=2$}\label{s3.2}

In the case $n=2$ Theorem~\ref{thm31} implies the following
trichotomy result for all weight modules (see \cite{Ma2}):

\begin{corollary}\label{cor071}
Assume that $n=2$ and $V$ is a simple weight $\mathfrak{g}$-module.
Then $V$ is either dense or punctured or cut.
\end{corollary}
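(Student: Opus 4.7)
The strategy is to reduce the corollary to Theorem~\ref{thm31}. Given a simple weight $\mathfrak{g}$-module $V$ with $n=2$, the plan is to verify, in each case where $V$ is not dense and not punctured, the hypothesis of Theorem~\ref{thm31}: the existence of a generalized highest weight element after a suitable change of variables.

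First, the trivial module has $\mathrm{supp}(V)=\{0\}$ and is tautologically cut. So assume $V$ is nontrivial, not dense, and not punctured, and set $\hat{\Lambda}=\Lambda\setminus\mathrm{supp}(V)$, a nonempty proper subset of $\Lambda$. A key preliminary observation is that Lemma~\ref{lem34} applies to $V$ without any generalized highest weight assumption: its proof uses only the containment $\mathfrak{g}_{\mu_i-\mu}V_\mu\subset V_{\mu_i}=0$ together with the bracket identity $[\mathfrak{g}_\alpha,\mathfrak{g}_\beta]=\mathfrak{g}_{\alpha+\beta}$ for $\alpha\neq\beta$. Combining the resulting convexity of $\hat{\Lambda}$ with the hyperplane separation theorem in $\R^2$, exactly as at the start of the proof of Theorem~\ref{thm31}, yields for any fixed $\lambda\in\mathrm{supp}(V)\setminus\{0\}$ a nonzero $a\in\R^2$ with $\hat{\Lambda}\subset\lambda+\mathbb{Z}^{(a)}_+$; equivalently, the lower half-plane $\lambda+\mathbb{Z}^{(a)}_{-0}$ is entirely contained in $\mathrm{supp}(V)$.

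The decisive step is to upgrade this one-sided containment to a full shifted cone $\mu+\mathbb{Z}_+^{\,2}\subset\hat{\Lambda}$ in some $\Z$-basis of $\Z^2$. Here the two-dimensional geometry is essential: since $|\hat{\Lambda}|\geq 2$ by the not-punctured hypothesis, Lemma~\ref{lem34} forces many integer lattice points along the segments spanned by pairs of elements of $\hat{\Lambda}$ to lie in $\hat{\Lambda}$; combined with the half-plane containment above and a short case analysis of the possible shapes of convex subsets of $\Z^2$ sitting inside a half-plane, one concludes that $\hat{\Lambda}$ must extend to infinity inside a two-dimensional wedge, and a suitable $GL_2(\Z)$ change of variables arranges this wedge to contain a translate $\mu+\mathbb{Z}_+^{\,2}$ in the new coordinates. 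For any weight vector $v\in V_\nu$ with $\nu$ far below $\mu$ componentwise (such $\nu$ exist in $\mathrm{supp}(V)$ thanks to the lower half-plane inclusion) one has $\mathfrak{g}_\beta v\subset V_{\nu+\beta}=0$ for every $\beta$ in a shifted positive orthant, so that $v$ is a generalized highest weight element in the new basis.

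Theorem~\ref{thm31} now concludes that $V$ is cut, completing the trichotomy. The main obstacle is the cone-extraction step; it relies essentially on the two-dimensional convex geometry of $\Z^2$ together with the not-punctured hypothesis, and the analogous statement genuinely fails for $n\geq 3$, which is why the clean trichotomy in Corollary~\ref{cor071} is special to rank two.
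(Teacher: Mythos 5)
Your reduction to Theorem~\ref{thm31} is the right strategy, and the preliminary observations are correct: Lemma~\ref{lem34} indeed does not use the generalized highest weight hypothesis (its proof only needs $\mathfrak{g}_{\mu_i-\mu}V_\mu\subset V_{\mu_i}=0$ and the bracket surjectivity), and the hyperplane separation argument at the start of the proof of Theorem~\ref{thm31} then gives $\hat{\Lambda}\subset\lambda+\mathbb{Z}_+^{(a)}$ for every nonzero $\lambda\in\mathrm{supp}(V)$.

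However, the ``decisive step'' --- the claim that the convexity of $\hat{\Lambda}$ together with the half-plane containment forces $\hat{\Lambda}$ to extend to infinity inside a two-dimensional wedge, hence to contain some shifted positive orthant $\mu+\mathbb{Z}_+^2$ after a $GL_2(\mathbb{Z})$ change of basis --- is false, and this is exactly where the real work in the paper's proof of Corollary~\ref{cor071} lies. Lemma~\ref{lem34} only forces integer points of segments to lie in $\hat{\Lambda}$ \emph{or to equal $0$}, and the escape clause $\mu=0$ leaves open a genuinely exceptional finite configuration: after normalization one can have $\hat{\Lambda}=\{(1,0),(1,2)\}$ with $(1,1)$ being the zero weight (that is, $\lambda+e_1+e_2=0$). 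This $\hat{\Lambda}$ is convex in the sense of Lemma~\ref{lem34}, sits in the half-plane $\{x\colon x_1>0\}$, has cardinality $2$, and visibly contains no shifted cone --- so your final step ``for $\nu$ far below, $\mathfrak{g}_\beta v\subset V_{\nu+\beta}=0$ for all $\beta$ in a shifted positive orthant'' has no $\beta$ to act on: every such $\nu+\beta$ with $\beta$ large lies in $\mathrm{supp}(V)$, not in $\hat{\Lambda}$. A GHW element still exists in this case, but it must be produced by a different mechanism: the paper shows directly that, with $\alpha=e_1$ and $\beta=2e_1+e_2$, one has $\mathfrak{g}_{a\alpha+b\beta}V_\lambda=0$ for all $a,b>1$ via an explicit commutator computation (splitting into parity cases for $b$ and using the vanishings at $(1,0)$ and $(1,2)$), so that $V_\lambda$ consists of GHW elements in the basis $\{\alpha+\beta,\alpha+2\beta\}$. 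Your outline also skips the preliminary reduction to the case ``$\hat{\Lambda}$ contains no full line'' (if it does contain a line one gets cutness directly), and relies on a ``short case analysis of convex subsets of $\mathbb{Z}^2$'' that, as the above shows, would have to isolate and separately dispose of this exceptional two-point case by a non-geometric argument. As written, the proposal has a genuine gap at precisely the point the published proof spends most of its effort on.
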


\begin{proof}
We work with the same setup (for $\Lambda$ and $\hat{\Lambda}$)
as in Theorem~\ref{thm31}.
Note that the trivial $\mathfrak{g}$-module is obviously cut. Hence,
because of Theorem~\ref{thm31}, it is enough to show  that any simple
nontrivial weight module $V$ for which $|\hat{\Lambda}|>1$ contains a
generalized highest weight element.

 Now suppose $|\hat{\Lambda}|>1$. If $\hat{\Lambda}$ contains a line in $\Z^n$, we can easily see
that $V$ is cut  (we are using the identification defined before
Lemma 9). Now we also suppose that each line in $\Z^n$ has at least
one weight of $V$. By changing the coordinate system $\{e_1, e_2\}$
and changing $\lambda$ (which is allowed to be $0$) if necessary, we
may assume that $(0,0)\notin \hat{\Lambda}$ and $(1,0), (1,k)\in
\hat{\Lambda}$
 for some
$k\in\mathbb{N}$. Using Lemma~\ref{lem34} we may even assume
$k\in\{1,2\}$. If $(1,1)\in\mathrm{supp}(V)$ we must have
$\lambda+e_1+e_2=0$, and further $\hat{\Lambda}=\{(1,0),(1,2)\}$
(otherwise we use other points instead of $ (1,0),(1,2)$ to get
$k=1$).

If $(1,1)\in \hat{\Lambda}$, then $\mathfrak{g}_{e_1}V_{\lambda}=
\mathfrak{g}_{e_1+e_2}V_{\lambda}=0$ and since $\{e_1,e_{1}+e_{2}\}$
is a $\mathbb{Z}$-basis of $\mathbb{Z}^2$ we obtain that any element
in $V_{\lambda}$ is  a generalized highest weight element.

Finally, we are left with the case $\hat{\Lambda}=\{(1,0),(1,2)\}$
(and we also have the equality $\lambda+e_1+e_2=0$, which  will not
be used). Consider the $\mathbb{Z}$-basis $\alpha=e_1$,
$\beta=2e_1+e_2$ of $\mathbb{Z}^2$. We claim that in this case for
any $a,b\in\mathbb{N}$ such that $a,b>1$ we have
$\mathfrak{g}_{a\alpha+b\beta}V_{\lambda}=0$ (and hence any element
in $V_{\lambda}$ is a generalized highest weight element with
respect to the basis $\{\alpha+\beta, \alpha+2\beta\}$). As
$\mathfrak{g}_{\alpha}V_{\lambda}=0$ by our assumptions, by
induction it is enough to show that
$\mathfrak{g}_{b\beta}V_{\lambda}=0$ for all $b\in\mathbb{N}$,
$b>1$.

If $b$ is even we have $b\beta=\frac{b}{2}(e_1+2e_2)+\frac{3b}{2}e_1$
and $\mathfrak{g}_{b\beta}V_{\lambda}=0$ follows from
$\mathfrak{g}_{e_1+2e_2}V_{\lambda}\subset V_{\lambda+e_1+2e_2}=0$
and $\mathfrak{g}_{e_1}V_{\lambda}\subset V_{\lambda+e_1}=0$.

If $b=2k+1$ is odd (in particular, $b\geq 3$), we write
$b\beta=(b\beta-e_2)+e_2$ and have
\begin{displaymath}
\mathfrak{g}_{b\beta}V_{\lambda}=
[\mathfrak{g}_{b\beta-e_2},\mathfrak{g}_{e_2}]V_{\lambda}\subset
\mathfrak{g}_{b\beta-e_2}V_{\lambda+e_2}+
\mathfrak{g}_{e_2}\mathfrak{g}_{b\beta-e_2}V_{\lambda}.
\end{displaymath}
Similarly to the argument in the previous paragraph we have
$\mathfrak{g}_{b\beta-e_2}V_{\lambda}=0$. Note that
$\mathfrak{g}_{e_1+e_2}V_{\lambda+e_2}\subset
V_{\lambda+e_1+2e_2}=0$,
$\mathfrak{g}_{e_1-e_2}V_{\lambda+e_2}\subset V_{\lambda+e_1}=0$
and that
$b\beta-e_2=(4k+2)e_1+2ke_2=(3k+1)(e_1+e_2)+(k+1)(e_2-e_2)$. Then
we have $\mathfrak{g}_{b\beta-e_2}V_{\lambda+e_2}=0$, and further
$\mathfrak{g}_{b\beta}V_{\lambda}=0$. This completes the proof.
\end{proof}

\subsection{Mixed modules}\label{s3.3}

Another way to generalize the results of this paper is to study the
supports of the so-called mixed modules. A weight
$\mathfrak{g}$-module $V$ is called {\em mixed} (see \cite{Ma3})
provided that there exists $\lambda\in\mathrm{supp}(V)$ and
$\alpha\in\mathbb{Z}^n$ such that $\dim V_{\lambda}=\infty$ and
$\dim V_{\lambda+\alpha}<\infty$. In \cite{MZ} it is shown that for
the Virasoro algebra mixed modules do not exist. However, for $n>1$
simple highest weight $W_n$-modules are mixed in the general case
(this follows for example from \cite{HWZ}). Hence it is natural to
ask whether there are other classes of simple mixed modules, for
example if there are mixed dense or mixed punctured modules.

\begin{conjecture}\label{conj}
Any mixed $\mathfrak{g}$-module is cut.
\end{conjecture}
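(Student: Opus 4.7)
The plan is to reduce the conjecture to Theorem~\ref{thm31} by establishing that every simple mixed $\mathfrak{g}$-module $V$ admits a generalized highest weight element after a suitable change of variables $t_1,\dots,t_n$. Once this is shown, non-triviality of $V$ is automatic from the existence of weight spaces of distinct dimensions, and Theorem~\ref{thm31} will deliver a half-space containment of $\mathrm{supp}(V)$; the mechanism producing the highest weight element should moreover yield that $V$ is not dense as a byproduct, since a generalized highest weight vector produces a genuine gap in the support by Lemma~\ref{lem036}.

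To construct the generalized highest weight element, fix $\lambda\in\mathrm{supp}(V)$ with $\dim V_\lambda=\infty$ and some $\mu=\lambda+\alpha_0$ with $0<\dim V_\mu<\infty$; such $\lambda,\mu$ exist by definition of mixed. Because $\mathfrak{g}_{\alpha_0}$ is finite-dimensional and $V_\mu$ is finite-dimensional, the subspace $K_{\alpha_0}=\{v\in V_\lambda:\mathfrak{g}_{\alpha_0}v=0\}$ has finite codimension in $V_\lambda$ and is still infinite-dimensional. The same applies to every $\alpha$ with $0\leq\dim V_{\lambda+\alpha}<\infty$. The idea is then to intersect the $K_\alpha$ over a positive cone of such $\alpha$, exploiting the fact that a countable intersection of cofinite subspaces of an infinite-dimensional space remains infinite-dimensional (provided that the quotient sizes are controlled), and to produce in this way a nonzero vector $v\in V_\lambda$ annihilated by $\mathfrak{g}_\alpha$ whenever $\alpha>(N,N,\dots,N)$ for some $N$.

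The principal obstacle, and the step where new input is needed, is the following dichotomy: after a suitable $\Z$-change of basis of $\Z^n$, the subset $\mathrm{supp}_\infty(V)=\{\alpha\in\mathrm{supp}(V):\dim V_\alpha=\infty\}$ should be disjoint from some positive cone. The natural approach is to mirror the proof of Lemma~\ref{l1}: restrict $V$ to a generic higher-rank Virasoro subalgebra $\mathrm{Vir}(\gamma)$, where rational linear independence of the $\gamma_i$ makes each $\mathrm{Vir}(\gamma)$-weight space coincide with a single $\mathfrak{g}$-weight space, decompose the restriction into simple subquotients, and use a structural theorem for simple weight $\mathrm{Vir}(\gamma)$-modules to locate a highest weight subquotient along a favorable direction. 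The difficulty is that \cite[Theorem~3.9]{LZ} classifies only Harish-Chandra modules over higher rank Virasoro algebras; for $n>1$ simple mixed $\mathrm{Vir}(\gamma)$-modules do exist, and a classification or at least a ``highest weight or no finite-dimensional weight spaces'' dichotomy in the style of \cite{Ma3} is, to the best of my knowledge, not yet available in higher rank. Establishing such a higher-rank analogue of \cite{Ma3} is the main missing ingredient, and I expect it to be the real content of any eventual proof.

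Once the generalized highest weight element has been produced, Theorem~\ref{thm31} applies and yields that $V$ is cut, completing the argument. If in addition one can arrange for the constructed vector $v$ to lie in an infinite-dimensional weight space, one obtains control over $\mathrm{supp}_\infty(V)$ as well, refining the conclusion along the lines suggested in the paragraph following Conjecture~\ref{conj}.
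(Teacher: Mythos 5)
The statement labeled Conjecture~\ref{conj} is, as its name indicates, an open conjecture in this paper: the authors explicitly say they ``give some motivation and (weak) evidence for this conjecture,'' and no proof is supplied. So there is no proof in the paper to compare against, and you should not expect your sketch to close into one. That said, the reduction you describe is sound, and its first half corresponds precisely to what the paper \emph{does} prove, namely Corollary~\ref{cor072}: a simple mixed module admitting a generalized highest weight element is cut. You are also right to flag the existence of such an element (after a change of $\mathbb{Z}$-basis of $\mathbb{Z}^n$) as the genuine gap, and that bridging it would require a structure theorem --- a ``highest weight or no finite-dimensional weight spaces'' dichotomy --- for \emph{arbitrary} simple weight modules over higher rank Virasoro algebras, whereas \cite[Theorem~3.9]{LZ} covers only Harish-Chandra modules. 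The paper circumvents this only for $n=2$, where the convexity Lemma~\ref{lem071} and the planar combinatorics of $\mathrm{supp}^{\mathrm{fin}}(V)$ force a generalized highest weight element once $|\mathrm{supp}^{\mathrm{fin}}(V)|>1$ (Corollary~\ref{cor073}); the general-$n$ case remains open, which is exactly why this is a conjecture.

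Two technical remarks on the sketch you do give. First, the claim that ``a countable intersection of cofinite subspaces of an infinite-dimensional space remains infinite-dimensional (provided the quotient sizes are controlled)'' is false as stated: a countable descending chain of codimension-one subspaces can have zero intersection. What saves the approach is that a generalized highest weight element requires only the \emph{finitely} many vanishing conditions $\mathfrak{g}_{e'_i}v=0$, $i=1,\dots,n$, for a suitably chosen $\mathbb{Z}$-basis $\{e'_i\}$ (this is the mechanism in the proof of Lemma~\ref{l1}), so one should use a finite intersection and a dimension count rather than a transfinite one; the genuine difficulty is arranging the basis so that those $n$ weight spaces are all finite-dimensional, which again comes back to the missing dichotomy. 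Second, Lemma~\ref{lem036} does not \emph{yield} non-density; it \emph{presupposes} $\hat{\Lambda}\neq\varnothing$. In Corollary~\ref{cor072} the authors instead prove that $V$ is not dense by the restriction-to-Virasoro argument of Lemma~\ref{lem102}, and you should do the same.
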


Below we give some motivation and (weak) evidence for this
conjecture.  Denote by $\mathrm{supp}^{\infty}(V)$ the set of all
$\lambda\in\mathfrak{g}_0^*$ such that $\dim V_{\lambda}=\infty$.
For a simple weight $\mathfrak{g}$-module $V$ we also denote by
$\mathrm{supp}^{\mathrm{fin}}(V)$ the set of all
$\lambda\in\mathrm{supp}(V)+\mathbb{Z}^n$ such that $\dim
V_{\lambda}<\infty$. Note that $\mathrm{supp}^{\mathrm{fin}}(V)$ may
not be a subset of $\mathrm{supp}(V)$. Let
$\Lambda=\mathrm{supp}(V)+\mathbb{Z}^n$.

\begin{lemma}\label{lem071}
Let $V$ be a simple mixed module and $\mu\in \Lambda$ be a convex
linear combination of some elements from
$\mathrm{supp}^{\mathrm{fin}}(V)$. Then we have either $\mu=0$ or
$\mu\in \mathrm{supp}^{\mathrm{fin}}(V)$.
\end{lemma}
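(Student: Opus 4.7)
My plan is to adapt the proof of Lemma~\ref{lem34} to the mixed setting by replacing ``$V_{\mu_i}=0$'' with the weaker ``$\dim V_{\mu_i}<\infty$''. First I will write $\mu=\sum_{i=1}^k a_i\mu_i$ with $\mu_i\in \mathrm{supp}^{\mathrm{fin}}(V)$, $a_i>0$, $\sum a_i=1$, and $k$ minimal. The case $k=1$ is trivial, so assume $k\geq 2$. The convex-geometric portion of the proof of Lemma~\ref{lem34} (affine independence, rationality of the coefficients, clearing denominators) carries over verbatim and produces positive integers $b_1,\dots,b_k$ satisfying $\sum_{i=1}^k b_i(\mu_i-\mu)=0$; exactly as there, this, combined with the bracket rule $[\mathfrak{g}_\alpha,\mathfrak{g}_\beta]=\mathfrak{g}_{\alpha+\beta}$ for $\alpha\neq\beta$, forces $\mathfrak{g}_0$ to lie in the Lie subalgebra $\mathfrak{L}$ of $\mathfrak{g}$ generated by $\mathfrak{g}_{\mu_1-\mu},\dots,\mathfrak{g}_{\mu_k-\mu}$.

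The genuinely new ingredient is a codimension estimate. Since each $\mathfrak{g}_{\mu_i-\mu}$ is finite-dimensional, and each $V_{\mu_i}$ is finite-dimensional by hypothesis, the $\Bbbk$-linear map
\begin{displaymath}
\Phi\colon V_\mu\longrightarrow \bigoplus_{i=1}^k \mathrm{Hom}_{\Bbbk}\bigl(\mathfrak{g}_{\mu_i-\mu},V_{\mu_i}\bigr),\qquad v\longmapsto \bigl(x\mapsto xv\bigr)_{i=1}^{k},
\end{displaymath}
has finite-dimensional target, and hence $K:=\ker\Phi=\{v\in V_\mu\,:\,\mathfrak{g}_{\mu_i-\mu}v=0 \text{ for all } i\}$ has finite codimension in $V_\mu$.

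Finally I will argue that $K=0$ whenever $\mu\neq 0$, which yields $\dim V_\mu<\infty$ and hence $\mu\in\mathrm{supp}^{\mathrm{fin}}(V)$. For $v\in K$, any iterated bracket of elements of $\mathfrak{g}_{\mu_1-\mu},\dots,\mathfrak{g}_{\mu_k-\mu}$ expands into a sum of monomials whose rightmost factor annihilates $v$, so every element of $\mathfrak{L}$ annihilates $v$; in particular $\mathfrak{g}_0 v=0$. Choosing $\partial\in\mathfrak{g}_0$ with $\mu(\partial)\neq 0$ then gives $0=\partial v=\mu(\partial)v$ and forces $v=0$. The only nontrivial obstacle I anticipate is verifying the inclusion $\mathfrak{g}_0\subseteq\mathfrak{L}$, since the bracket rule fails in the case $\alpha=\beta$; however, the integer relation $\sum b_i(\mu_i-\mu)=0$ is identical to the one produced in Lemma~\ref{lem34}, and the bracket-bootstrapping argument used there applies without modification.
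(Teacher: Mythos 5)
Your proposal is correct and follows essentially the same route as the paper's proof: the paper likewise invokes the affine-independence and rationality argument from Lemma~\ref{lem34}, uses the finite-dimensionality of the $V_{\mu_i}$ and $\mathfrak{g}_{\mu_i-\mu}$ to produce a nonzero $v\in V_\mu$ annihilated by all $\mathfrak{g}_{\mu_i-\mu}$ (your $K$ having finite codimension is exactly this observation), and then runs the bracket argument to force $\mathfrak{g}_0 v=0$ and hence $\mu=0$. The only cosmetic difference is that you phrase the endgame as ``$K=0$ when $\mu\neq 0$, so $\dim V_\mu<\infty$,'' whereas the paper argues by contradiction from $\mu\in\mathrm{supp}^{\infty}(V)$.
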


\begin{proof}
Let $\mu$ be a convex linear combination of some
$\mu_1,\dots,\mu_k\in \mathrm{supp}^{\mathrm{fin}}(V)$ as in the
proof of Lemma~\ref{lem34}. Assume $\mu\in \mathrm{supp}^{\infty}(V)$.
Then $\dim V_{\mu}=\infty$ while $\dim V_{\mu_i}<\infty$ for all
$i=1,\dots,k$. Hence there exists $v\in V_{\mu}$, $v\neq 0$, such
that $\mathfrak{g}_{\mu_i-\mu}v=0$ for all $i=1,\dots,k$. Repeating
the arguments from the proof of Lemma~\ref{lem34} we obtain
$\mathfrak{g}_{0}v=0$, which implies $\mu=0$. The claim follows.
\end{proof}

\begin{corollary}\label{cor072}
Any simple mixed module $V$ containing a generalized highest weight
element is a cut module.
\end{corollary}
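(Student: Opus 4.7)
The plan is to mirror the proof of Theorem~\ref{thm31} with Lemma~\ref{lem071} playing the role of Lemma~\ref{lem34} and the set $\mathrm{supp}^{\mathrm{fin}}(V)$ playing the role of $\hat{\Lambda}$. Since $V$ is mixed it is nontrivial; by Lemma~\ref{lem101} every element of $V$ is a generalized highest weight element; and both $\mathrm{supp}^{\infty}(V)$ and $\mathrm{supp}^{\mathrm{fin}}(V)$ are nonempty. First I would fix $\lambda\in\mathrm{supp}^{\infty}(V)\setminus\{0\}$ (handling the degenerate case $\mathrm{supp}^{\infty}(V)=\{0\}$ separately, where $V$ is essentially Harish-Chandra away from the origin). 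By Lemma~\ref{lem071}, $\lambda$ lies outside the closed convex hull of $\mathrm{supp}^{\mathrm{fin}}(V)$ in $\mathbb{R}^n$, so the hyperplane separation theorem produces a nonzero $a\in\mathbb{R}^n$ with $\mathrm{supp}^{\mathrm{fin}}(V)\subset\lambda+\mathbb{Z}^{(a)}_{+}$.

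The key technical adaptation is of Lemma~\ref{lem036}: for $\mu\in\mathrm{supp}^{\mathrm{fin}}(V)$ and $\beta\cdot a<0$, the claim is $\mu-\beta\in\mathrm{supp}^{\mathrm{fin}}(V)$. Suppose, to the contrary, that $\mu-\beta\in\mathrm{supp}^{\infty}(V)$, so that $\dim V_{\mu-\beta}=\infty$ while $\dim V_\mu<\infty$. Since $\mathfrak{g}_\beta$ is finite-dimensional, the joint kernel of its action on $V_{\mu-\beta}$ has codimension at most $\dim\mathfrak{g}_\beta\cdot\dim V_\mu$ and is therefore infinite-dimensional, furnishing a nonzero $v\in V_{\mu-\beta}$ with $\mathfrak{g}_\beta v=0$. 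The proofs of Lemmas~\ref{lem033}--\ref{lem035} carry over with $\mathrm{supp}^{\mathrm{fin}}(V)$ in place of $\hat{\Lambda}$ and $\mathrm{supp}^{\infty}(V)$ in place of $\mathrm{supp}(V)$---the same kernel-production trick produces the auxiliary annihilating vectors $v_i^\varepsilon\in V_{\mu_i^\varepsilon}$ used at each step---and the concluding calculation of Lemma~\ref{lem036} forces $\mathfrak{g}_\alpha v=0$ for every $\alpha\in\mathbb{Z}^n$. By simplicity this makes $V$ trivial, contradicting mixedness. Iterating gives $\mu_0+\mathbb{Z}^{(a)}_{+}\subset\mathrm{supp}^{\mathrm{fin}}(V)$ for any $\mu_0\in\mathrm{supp}^{\mathrm{fin}}(V)$, and hence $\mathrm{supp}^{\infty}(V)\subset\mu_0+\mathbb{Z}^{(a)}_{-0}$.

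To deduce that $V$ itself is a cut module (rather than merely that $\mathrm{supp}^{\infty}(V)$ lies in a half-space), I would invoke Theorem~\ref{thm31}: since $V$ is nontrivial and admits a GHW element, Theorem~\ref{thm31} delivers the cut conclusion as soon as $V$ is known not to be dense. The density of $V$ is excluded by applying a Lemma~\ref{lem107}-style argument to a generalized highest weight element sitting in a putative finite-dimensional nonzero weight space inside the half $\mu_0+\mathbb{Z}^{(a)}_{+}$: combining its GHW annihilation cone with the kernel-production trick above produces annihilation by a set of $\mathfrak{g}_\alpha$ whose indices generate $\mathbb{Z}^n$ as a monoid, forcing $V$ to be trivial.

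The hard part will be this last bridging step. Unlike in the Harish-Chandra setting of Theorem~\ref{thm31}, where the convexity of $\hat{\Lambda}$ directly yields a half-space containment of $\mathrm{supp}(V)$, in the mixed setting Lemma~\ref{lem071} only controls $\mathrm{supp}^{\mathrm{fin}}(V)$ and the propagation therefore only bounds $\mathrm{supp}^{\infty}(V)$. Upgrading this to a bound on the full support requires using the generalized highest weight hypothesis essentially, in order to rule out finite-dimensional nonzero weight spaces in the half-space complementary to $\mathrm{supp}^{\infty}(V)$; without such an additional input one only recovers the weaker conclusion that the infinite-dimensional part of the support lies in half of the weight lattice.
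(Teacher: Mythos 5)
Your overall architecture (reduce to non-density, then invoke Theorem~\ref{thm31}) is correct, but the route you take is substantially more elaborate than the paper's, and the critical final step—excluding density—is where your argument breaks down.

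The paper's proof is short and direct. Fix $\lambda\in\mathrm{supp}^{\mathrm{fin}}(V)$. For any $\alpha\in\mathbb{N}^n$, if $\dim V_{\lambda+\alpha}=\infty$ then (since $\dim\mathfrak{g}_{-\alpha}\cdot\dim V_\lambda<\infty$) the kernel-production trick you describe yields a nonzero $v\in V_{\lambda+\alpha}$ with $\mathfrak{g}_{-\alpha}v=0$; combined with the generalized highest weight property this forces $\mathfrak{g}v=0$ as in Lemma~\ref{lem107}, so $V$ is trivial, a contradiction. Hence $\lambda+\mathbb{N}^n\subset\mathrm{supp}^{\mathrm{fin}}(V)$. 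Now non-density follows by the argument of Lemma~\ref{lem102}: restrict to a rank-one Virasoro subalgebra $\mathfrak{V}_\alpha$ along a ray through $\lambda$; the generalized highest weight condition plus Mathieu's Lemma~1.6 produce infinitely many simple highest weight $\mathfrak{V}_\alpha$-subquotients each of which contributes to $V_\lambda$, so a dense $V$ would force $\dim V_\lambda=\infty$. Since $\dim V_\lambda<\infty$, $V$ is not dense, and Theorem~\ref{thm31} finishes.

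Your plan instead re-derives most of the internals of Theorem~\ref{thm31} (adapting Lemmas~\ref{lem033}--\ref{lem036} and the separation argument) to establish a half-space containment of $\mathrm{supp}^{\infty}(V)$—which is then thrown away when you invoke Theorem~\ref{thm31}. This duplication is avoidable. More importantly, your last ``bridging step'' is the genuine gap, and you flag it yourself. You propose excluding density by ``applying a Lemma~\ref{lem107}-style argument to a generalized highest weight element sitting in a putative finite-dimensional nonzero weight space,'' claiming the kernel-production trick supplies the missing annihilating directions. But it does not: the kernel-production trick needs an infinite-dimensional weight space mapping to a finite-dimensional one, and in the putative dense region you are working with all weight spaces finite-dimensional, so there is nothing to produce a vector annihilated by $\mathfrak{g}_{-\beta}$ for some $\beta\in\mathbb{N}^n$. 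A generalized highest weight element in a finite-dimensional weight space, by itself, gives no contradiction in a dense module. What is actually needed is the Lemma~\ref{lem102} mechanism (Virasoro subalgebra plus Mathieu's lemma), not the Lemma~\ref{lem107} monoid-generation mechanism, and you never invoke it.
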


\begin{proof}
We may assume that every element in $V$ is a generalized highest
weight element with respect to the basis $e_1,\dots,e_n$. Let
$\lambda\in \mathrm{supp}^{\mathrm{fin}}(V)$. Then for any
$\alpha\in\mathbb{N}^n$ we have $\lambda+\alpha\in
\mathrm{supp}^{\mathrm{fin}}(V)$ for otherwise $\dim
V_{\lambda+\alpha}=\infty$ and hence $V_{\lambda+\alpha}$ must
contain a nonzero vector $v$, annihilated by
$\mathfrak{g}_{-\alpha}$. Using that $v$ is a generalized highest
weight element and $\mathfrak{g}_{-\alpha}v=0$ one shows that
$\mathfrak{g}v=0$, implying that $V$ is trivial and hence not
mixed, a contradiction.

From  $\lambda+\alpha\in \mathrm{supp}^{\mathrm{fin}}(V)$ for any
$\alpha\in\mathbb{N}^n$, similarly to the proof of Lemma~\ref{lem102}
one shows that $V$ is not dense. Hence the claim follows from
Theorem~\ref{thm31}.
\end{proof}

\begin{corollary}\label{cor073}
Let $n=2$. Then any simple mixed module $V$ for which
$|\mathrm{supp}^{\mathrm{fin}}(V)|>1$ is a cut module.
\end{corollary}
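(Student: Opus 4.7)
My plan is to exhibit a generalized highest weight element of $V$ (for some choice of variables) and then invoke Corollary~\ref{cor072} to conclude. The argument will parallel the proof of Corollary~\ref{cor071}, with Lemma~\ref{lem071} replacing Lemma~\ref{lem34} and weight spaces of infinite dimension playing the role there played by vanishing weight spaces.

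First I would fix some $\lambda_0\in\mathrm{supp}^{\infty}(V)$, which exists because $V$ is mixed. The key replacement for the identity $\mathfrak{g}_{\mu-\lambda}V_\lambda=0$ used throughout Corollary~\ref{cor071} is the following cofinite-kernel observation: for every finite subset $\{\mu_1,\dots,\mu_k\}$ of $\mathrm{supp}^{\mathrm{fin}}(V)\setminus\{\lambda_0\}$, each image $\mathfrak{g}_{\mu_i-\lambda_0}\cdot V_{\lambda_0}$ lies in the finite-dimensional space $V_{\mu_i}$, so the joint kernel of the actions of $\mathfrak{g}_{\mu_i-\lambda_0}$ on $V_{\lambda_0}$ has finite codimension in the infinite-dimensional space $V_{\lambda_0}$; in particular there is a nonzero $v\in V_{\lambda_0}$ with $\mathfrak{g}_{\mu_i-\lambda_0}v=0$ for every $i$.

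Next, using $|\mathrm{supp}^{\mathrm{fin}}(V)|>1$ and the convexity of $\mathrm{supp}^{\mathrm{fin}}(V)\cup\{0\}$ supplied by Lemma~\ref{lem071}, I would run the same case analysis as in Corollary~\ref{cor071}. If $\mathrm{supp}^{\mathrm{fin}}(V)$ contains a whole line of $\Lambda$, then $V$ is cut directly by adapting Lemmas~\ref{lem033}--\ref{lem036} to the set $\mathrm{supp}^{\mathrm{fin}}(V)$, with the cofinite-kernel observation supplying at each step the annihilations of $v$ that were provided by vanishing weight spaces in the original proof. Otherwise, after a suitable change of $\mathbb{Z}$-basis of $\mathbb{Z}^2$ and translating the reference weight to $\lambda_0$, two elements of $\mathrm{supp}^{\mathrm{fin}}(V)$ sit at $\lambda_0+e_1$ and $\lambda_0+e_1+ke_2$ with $k\in\{1,2\}$. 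For $k=1$ the vector $v$ above satisfies $\mathfrak{g}_{e_1}v=\mathfrak{g}_{e_1+e_2}v=0$ and is immediately a generalized highest weight element in the basis $\{e_1,e_1+e_2\}$; for $k=2$ I would rerun the odd/even bracket induction that concludes the proof of Corollary~\ref{cor071}, carried out on the specific vector $v$ rather than on the full weight space $V_\lambda$, to exhibit $v$ as a generalized highest weight element in the basis $\{\alpha+\beta,\alpha+2\beta\}$, where $\alpha=e_1$ and $\beta=2e_1+e_2$.

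The main obstacle I expect is the $k=2$ case, because the bracket computation of Corollary~\ref{cor071} uses vanishings of whole weight spaces $V_{\lambda+\gamma}$ at several intermediate steps, while here we only have $\mathfrak{g}_{e_1}v=\mathfrak{g}_{e_1+2e_2}v=0$ for a particular $v$. My remedy is to enlarge the list of annihilation conditions imposed on $v$ at the cofinite-kernel step by adjoining the additional elements of $\mathrm{supp}^{\mathrm{fin}}(V)$ that Lemma~\ref{lem071} forces into the convex hull of $\lambda_0+e_1$ and $\lambda_0+e_1+2e_2$, and then to replay each bracket manipulation of Corollary~\ref{cor071} directly on the vector $v$ using these enriched relations. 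Once $v$ is exhibited as a generalized highest weight element, Corollary~\ref{cor072} completes the argument.
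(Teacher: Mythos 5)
Your overall plan matches the paper's: find a generalized highest weight element via the cofinite-kernel observation (a map from the infinite-dimensional $V_{\lambda_0}$ into a finite-dimensional target has cofinite kernel), run the case analysis parallel to Corollary~\ref{cor071} using Lemma~\ref{lem071} in place of Lemma~\ref{lem34}, and conclude by Corollary~\ref{cor072}. The $k=1$ case is handled exactly as in the paper. However, your proposed remedy for the $k=2$ case does not work as stated. In that case we have $\lambda_0+e_1,\ \lambda_0+e_1+2e_2 \in \mathrm{supp}^{\mathrm{fin}}(V)$ and the only further lattice point in their convex hull is $\lambda_0+e_1+e_2$, which by Lemma~\ref{lem071} is either in $\mathrm{supp}^{\mathrm{fin}}(V)$ (but then we are in the $k=1$ case, not $k=2$) or equals $0$ with $V_0$ infinite-dimensional (so it is \emph{not} in $\mathrm{supp}^{\mathrm{fin}}(V)$). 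Thus ``adjoining the additional elements of $\mathrm{supp}^{\mathrm{fin}}(V)$ in the convex hull'' contributes no new first-order annihilation condition on $v$, and the obstruction you correctly identified in the odd-$b$ bracket step (which in Corollary~\ref{cor071} uses $V_{\lambda+e_1+2e_2}=V_{\lambda+e_1}=0$ evaluated at the shifted point $\lambda+e_2$) remains.

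The fix requires \emph{second-order} cofinite-kernel conditions, not more first-order ones. The compositions $\mathfrak{g}_{e_1+e_2}\mathfrak{g}_{e_2}$ and $\mathfrak{g}_{e_1-e_2}\mathfrak{g}_{e_2}$ map $V_{\lambda_0}$ into $V_{\lambda_0+e_1+2e_2}$ and $V_{\lambda_0+e_1}$ respectively, both finite-dimensional, so their joint kernels in $V_{\lambda_0}$ are cofinite. Intersecting these with the first-order kernels still yields a nonzero $v\in V_{\lambda_0}$, and for such $v$ the odd-$b$ bracket manipulation of Corollary~\ref{cor071}, replayed on $v$, goes through because the term $\mathfrak{g}_{b\beta-e_2}(\mathfrak{g}_{e_2}v)$ now vanishes by the decomposition $b\beta-e_2=(3k+1)(e_1+e_2)+(k+1)(e_1-e_2)$ and the imposed second-order annihilations. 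This is the precise form your ``enriched relations'' should take; as written, your enrichment is empty in the $k=2$ case.
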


\begin{proof}
Using Lemma~\ref{lem071}, similarly to the proof of
Corollary~\ref{cor071} we may assume
$\lambda\in \mathrm{supp}^{\infty}(V)$,
$\lambda+e_1\in \mathrm{supp}^{\mathrm{fin}}(V)$
and either $\lambda+e_1+e_2\in \mathrm{supp}^{\mathrm{fin}}(V)$
or $\lambda+e_1+2e_2\in \mathrm{supp}^{\mathrm{fin}}(V)$.

In the first case from $\dim V_{\lambda}=\infty$ and
$\dim V_{\lambda+e_1},\dim V_{\lambda+e_1+e_2}<\infty$ we have that there
exists $v\in V_{\lambda}$, annihilated by both $\mathfrak{g}_{e_1}$
and $\mathfrak{g}_{e_1+e_2}$. As $\{e_1,e_1+e_2\}$ is a
$\mathbb{Z}$-basis of $\mathbb{Z}^2$, the element $v$ is a
generalized highest weight element and the claim follows from
Corollary~\ref{cor072}.

In the second case ($\lambda+e_1+2e_2\in \mathrm{supp}^{\mathrm{fin}}(V)$)
one proves the existence of a generalized highest weight element
in $V_{\lambda}$ similarly to the proof of Corollary~\ref{cor072}.
The claim follows.
\end{proof}

\begin{corollary}\label{cor074}
Let $n=2$. Then for any simple mixed punctured module $V$
we have $\mathrm{supp}^{\infty}(V)=\mathrm{supp}(V)$.
\end{corollary}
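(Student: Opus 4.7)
The plan is to deduce this almost immediately from Corollary~\ref{cor073} by ruling out the possibility that $\mathrm{supp}^{\mathrm{fin}}(V)$ contains more than one point. Since $V$ is punctured, after the appropriate shift we have $\mathrm{supp}(V)=\mathbb{Z}^2\setminus\{0\}$, so in particular $\Lambda=\mathrm{supp}(V)+\mathbb{Z}^2=\mathbb{Z}^2$. Thus $0\in \Lambda$ and, since $V_0=0$ has dimension $0<\infty$, the point $0$ belongs automatically to $\mathrm{supp}^{\mathrm{fin}}(V)$. This gives one free element of $\mathrm{supp}^{\mathrm{fin}}(V)$ without any work.

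Next, I would argue by contradiction. Suppose there exists some $\mu\in\mathrm{supp}(V)$ with $\dim V_\mu<\infty$. Then $\mu\neq 0$ and $\mu\in\mathrm{supp}^{\mathrm{fin}}(V)$, so $|\mathrm{supp}^{\mathrm{fin}}(V)|\geq 2$. By Corollary~\ref{cor073}, $V$ must then be a cut module: there are $\lambda'\in\mathrm{supp}(V)$, $a\in\mathbb{R}^2\setminus\{0\}$ and $b\in\mathbb{Z}^2$ with $\mathrm{supp}(V)\subset \lambda'+b+\mathbb{Z}_{-0}^{(a)}$.

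The final step is to observe that this containment is incompatible with $V$ being punctured. The right-hand side is a closed affine half-plane in $\mathbb{Z}^2$, and for any $a\neq 0$ the complementary open half-plane $\mathbb{Z}_+^{(a)}$ (shifted by $\lambda'+b$) contains infinitely many nonzero lattice points, each of which lies in $\mathbb{Z}^2\setminus\{0\}=\mathrm{supp}(V)$. This contradicts the containment and forces $\mathrm{supp}^{\mathrm{fin}}(V)=\{0\}$; equivalently, every weight in $\mathrm{supp}(V)=\mathbb{Z}^2\setminus\{0\}$ has infinite-dimensional weight space, which is exactly $\mathrm{supp}^{\infty}(V)=\mathrm{supp}(V)$.

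There is no real obstacle here; the work was done in Lemma~\ref{lem071} and Corollary~\ref{cor073}. The only thing worth being careful about is the trivial but essential remark that $0\in \mathrm{supp}^{\mathrm{fin}}(V)$ even though $0\notin\mathrm{supp}(V)$, since this is what lets us invoke Corollary~\ref{cor073} from the existence of a single finite-dimensional weight space in the support.
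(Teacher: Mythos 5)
Your argument is correct and is exactly the one the paper has in mind when it writes ``This follows from Corollary~\ref{cor073} and definitions'': you use the free element $0\in\mathrm{supp}^{\mathrm{fin}}(V)$ together with any putative finite-dimensional nonzero weight to invoke Corollary~\ref{cor073}, and then note that a punctured support cannot sit inside a half-plane. No gap; your write-up just makes the routine details explicit.
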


\begin{proof}
This follows from Corollary~\ref{cor073} and definitions.
\end{proof}

\vspace{0.5cm}

\begin{center}
\bf Acknowledgments
\end{center}

The research was done during the visit of the first author to
Wilfrid Laurier University in April and May 2009. The hospitality
and financial support of Wilfrid Laurier University are gratefully
acknowledged. The first author was partially
supported by the Swedish Research Council. The second
author was partially supported by NSERC
and NSF of China (Grant 10871192).
We thank Svante Janson for helpful discussions.

\vspace{1cm}

\noindent
V.M.: Department of Mathematics, Uppsala University, Box 480,
SE-751 06, Uppsala, SWEDEN; e-mail: {\tt mazor\symbol{64}math.uu.se}
\vspace{0.2cm}

\noindent K.Z.: Department of Mathematics, Wilfrid Laurier
University, Waterloo, Ontario, N2L 3C5, Canada; and Institute of
Mathematics, Academy of Mathematics and System Sciences, Chinese
Academy of Sciences, Beijing 100190, PR China; e-mail: {\tt
kzhao\symbol{64}wlu.ca}

\end{document}